\newcommand{\N}{\mathbb{N}}
\newcommand{\Z}{\mathbb{Z}}
\newcommand{\R}{\mathbb{R}}
\newcommand{\0}{\mathbb{0}}
\newcommand{\Poly}{{\mathsf{P}}}
\newcommand{\NC}{{\mathsf{NC}}}
\newcommand{\neighborhood}{\mathcal{N}}
\newcommand{\degree}{d}
\newcommand{\neighbors}{\mathcal{N}}
\newcommand{\indic}{{\bf 1}}
\newcommand{\unstab}{Act}
\newcommand{\stab}{Stab}
\newtheorem{theorem}{\textbf{Theorem}}
\newtheorem{lemma}{\textbf{Lemma}}
\newtheorem{proposition}{\textbf{Proposition}}
\newtheorem{corollary}{\textbf{Corollary}}
\newtheorem{remark}{\textbf{Remark}}
\newtheorem{definition}{\textbf{Definition}}
\newtheorem{remark*}{\textbf{Remark}}
\title{Any shape can ultimately cross information\\ on two-dimensional abelian sandpile models}
\author[1,2]{Viet-Ha Nguyen}
\author[2]{Kevin Perrot}
\affil[1]{\'Ecole Normale Sup\'erieure de Lyon, Computer Science department, Lyon, France}
\affil[2]{Aix-Marseille Universit\'e, CNRS, Centrale Marseille, LIF, Marseille, France}
\date{}
\begin{document}
\renewcommand{\labelitemi}{$\bullet$}
\renewcommand{\labelitemii}{$\circ$}
\maketitle

%%%%%%%%%%%%%%%%%%%%%%%%%%
\begin{abstract}
  In this paper we study the abelian sandpile model on the two-dimensional grid with uniform neighborhood, and prove that any family of neighborhoods defined as scalings of a continuous non-flat shape can ultimately perform crossing.
\end{abstract}

%%%%%%%%%%%%%%%%%%%%%%%%%%
\section{Introduction}

In \cite{1987-BakTangWiesenfeld-SOC}, three physicists proposed the now famous two-dimensional {\em abelian sandpile model} with von Neumann neighborhood of radius one. This number-conserving discrete dynamical system is defined by a simple local rule describing the movements of sand grains in the discrete plane $\Z^2$, and exhibits surprisingly complex global behaviors.

The model has been generalized to any directed graph (\cite{1992-Lovasz,1998-Dhar-AbelianSanpileAndRelatedModels}). Basically, given a digraph, each vertex has a number of sand grains on it, and a vertex that has more grains than out-neighbors can give one grain to each of its out-neighbors. This model is Turing-universal (\cite{1997-Goles-UniversalityCFG}). When restricted to particular directed graphs (digraphs), an interesting notion of complexity is given by the following {\em prediction problem}.

\vspace*{.5em}
\fbox{\parbox{12.6cm}{
  {\bf Prediction problem.}\\
  {\it Input:} a finite and stable configuration, and two vertices $v$ and $u$.\\
  {\it Question:} \hspace*{-.2cm}\begin{tabular}[t]{l}does adding one grain on vertex $v$ triggers a chain of reactions\\that will reach vertex $u$?\end{tabular}
}}
\vspace*{.5em}

Depending on the restrictions applied to the digraph, the computational complexity in time of this problem has sometimes been proven to be $\Poly$-complete, and sometimes to be in $\NC$. In order to prove the $\Poly$-completeness of the prediction problem, authors naturally try to implement circuit computations, via reductions from the {\em Monotone Circuit Value Problem} (MCVP), {\em i.e.} they show how to implement the following set of gates: {\em wire}, {\em turn}, {\em multiply}, {\em and}, {\em or}, and {\em crossing} (see Section \ref{s:known} for a review).

In abelian sandpile models, monotone gates are usually easy to implement with {\em wires} constructed from sequences of vertices that fire one after the other\footnote{this is a particular case of {\em signal} ({\em i.e.} information transport) that we can qualify as {\em elementary}.}: an {\em or} gate is a vertex that needs one of its in-neighbors to fire; an {\em and} gate is a vertex that needs two of its in-neighbors to fire. The crucial part in the reduction is therefore the implementation of a {\em crossing} between two wires. Regarding regular graphs, the most relevant case is the two-dimensional grid (in dimension one crossing is less meaningful, and from dimension three is it easy to perform a crossing using an extra dimension; see Section \ref{s:known} for references).

When it is possible to implement a crossing, then the prediction problem is $\Poly$-complete. The question is now to formally relate the impossibility to perform a crossing with the computational complexity of the prediction problem (would it be in $\NC$?). The goal is thus to find conditions on a neighborhood so that it cannot perform a crossing (this requires a precise definition of {\em crossing}), and prove that these conditions also imply that the prediction problem is in $\NC$. As an hint for the existence of such a link, it is proven in \cite{2006-Goles-CrossingInfo2DBTWSandpile} that crossing information is not possible with von Neumann neighborhood of radius one, for which the computational complexity of the prediction problem has never been proven to be $\Poly$-complete (neither in $\NC$). The present work continues the study on general uniform neighborhoods, and shows that the conditions on the neighborhood so that it can or cannot perform crossing are intrinsically discrete. 

Section \ref{s:def} defines the abelian sandpile model, neighborhood, shape, and crossing configuration (this last one requires a substantial number of elements to be defined with precision, as it is one of our aims), and Section \ref{s:known} reviews the main known results related to prediction problem and information crossing. The notion of firing graph (from \cite{2006-Goles-CrossingInfo2DBTWSandpile}) is presented and studied at the beginning of Section \ref{s:study}, which then establishes some conditions on crossing configurations for convex neighborhoods, and finally exposes the main result of this paper: that any shape can ultimately perform crossing.

%%%%%%%%%%%%%%%%%%%%%%%%%%
\section{Definitions}
\label{s:def}

In the literature, {\em abelian sandpile model} and {\em chip-firing game} usually refer to the same discrete dynamical system, sometimes on different classes of (un)directed graphs.

\subsection{Abelian sandpile models on $\Z^2$ with uniform neighborhood}

Given a digraph $G=(V,A)$, we denote $\degree^+(v)$ (resp. $\degree^-(v)$) the out-degree (resp. in-degree) of vertex $v \in V$, and $\neighbors^+(v)$ (resp. $\neighbors^-(v)$) its set of out-neighbors (resp. in-neighbors). A {\em configuration} $c$ is an assignment of a finite number of sand grains to each vertex, $c : V \to \N$. The dynamics $F:\N^{|V|} \to \N^{|V|}$ is defined by the parallel application of a local rule at each vertex: if vertex $v$ contains at least $\degree^+(v)$ grains, then it gives one grain to each of its out-neighbors (we say that $v$ {\em fires}, or $v$ is a {\em firing} vertex). Formally,
\begin{equation}
  \label{eq:sandpile}
  \forall v \in V,~ \big(F(c)\big)(v)=c(v)-\degree^+(v)\indic_{\N}\big(c(v)-\degree^+(v)\big)+\sum_{u \in \neighbors^-(v)} \indic_{\N}\big(c(u)-\degree^+(u)\big)
\end{equation}
in which $\indic_{\N}(x)$ the indicator function of $\N$, that equals 1 when $x \geq 0$ and 0 when $x<0$. Note that this discrete dynamical system is deterministic. An example of evolution is given on Figure \ref{fig:sandpile}.

\begin{figure}[!h]
  \centering \includegraphics{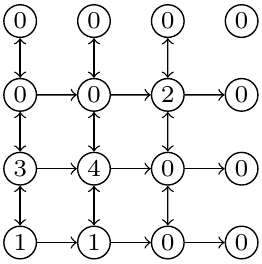} \raisebox{1.2cm}{$\overset{F}{\mapsto}$} \includegraphics{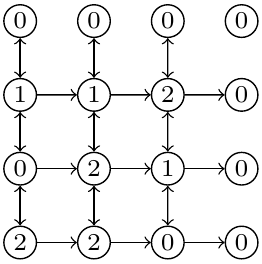} \raisebox{1.2cm}{$\overset{F}{\mapsto}$} \includegraphics{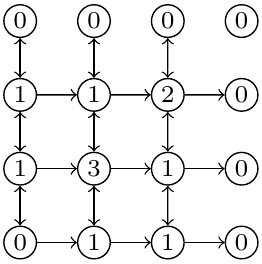}
  \caption{An example of two evolution steps in the abelian sandpile model.}
  \label{fig:sandpile}
\end{figure}

\begin{remark}
  \label{remark:self-loops}
  As self-loops (vertices of the form $(v,v)$ for some $v \in V$) are not useful for the dynamics (there are just some grains trapped on the vertex), all the digraphs we consider won't have self-loops, even when we don't explicitly mention this.
\end{remark}

We say that a vertex $v$ is {\em stable} when $c(v)<\degree^+(v)$, and {\em unstable} otherwise. By extension, a configuration $c$ is {\em stable} when all the vertices are stable, and {\em unstable} if at least one vertex is not stable. Given a configuration $c$, we denote $\stab(c)$ (resp. $\unstab(c)$) the set of stable (resp. unstable) vertices.

In this work, we are interested in the dynamics when the support graph is the two-dimensional grid $\Z^2$, with a uniform neighborhood. In mathematical terms, given some finite {\em neighborhood} $\neighborhood^+ \subset \Z^2$, we define the graph $G^{\neighborhood^+}=(V,A^{\neighborhood^+})$ on which we will study the abelian sandpile dynamics, with $V=\Z^2$ and
\begin{equation}
  \label{eq:neighbors}
  A^{\neighborhood^+}=\big\{ \big((x,y),(x',y')\big) ~|~ (x'-x,y'-y) \in \neighborhood^+ \big\}.
\end{equation}
On $G^{\neighborhood^+}$ a vertex fires if it has at least $p^{\neighborhood^+}=|\neighborhood^+|$ grains. When there is no ambiguity, we will omit the superscript $\neighborhood^+$ in order to lighten the notations. An example is given on Figure \ref{fig:graph-n}.

\begin{figure}[!h]
  \centering \raisebox{1cm}{\includegraphics{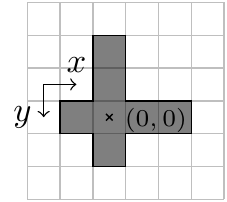}} \hspace*{1cm} \includegraphics{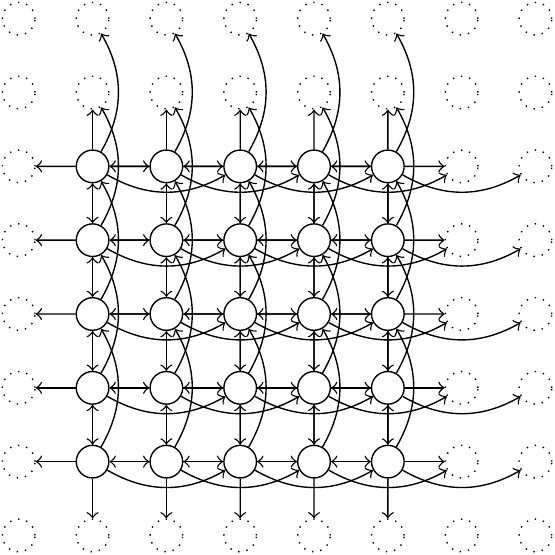}
  \caption{A neighborhood $\neighborhood^+$ (left) and a part of the corresponding graph $G^{\neighborhood^+}$ (right). In this example $p^{\neighborhood^+}=6$ grains.}
  \label{fig:graph-n}
\end{figure}

We say that a configuration is {\em finite} when it contains a finite number of grains, or equivalently when the number of non-empty vertices is finite (by definition, the number of grains on each vertex is finite). We say that a finite configuration $c$ is a square of size $n \times n$ if there is no grain outside a window of size $n$ by $n$ cells: there exists $(x_0,y_0)$ such that for all $(x,y) \in \Z^2 \setminus \{ (x',y') ~|~ x_0 \leq x' < x_0+n  \wedge y_0 \leq y' < y_0+n \}$ we have $c((x,y))=0$.

\begin{definition}[movement vector]
  Given a {\em neighborhood} $\neighborhood^+ \subset \Z^2 \setminus \{(0,0)\}$ of $p$ cells, a vector $\vec{v}$ such that $(0,0)+\vec{v} \in \neighborhood^+$ is called a {\em movement vector}. We denote $\neighborhood^+(v)$ the set of neighbors of vertex $v$, {\em i.e.} $\neighborhood^+(v)=\neighborhood^++\vec{v}$.
\end{definition}

We will only study finite neighborhoods and finite configurations, which ensures that the dynamic converges when the graph is connected (otherwise the neighborhood has only collinear movement vectors and crossing information becomes less meaningful), as stated in the following lemma.

\begin{lemma}
  \label{lemma:converge}
  Given a finite neighborhood $\neighborhood^+$ with at least two non-collinear movement vectors $\vec{u}, \vec{v}$, and a finite configuration $c$, there exists $t_0 \in \N$ such that $F^{t_0}(c)$ is stable.
\end{lemma}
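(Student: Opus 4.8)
The plan is to reduce the statement to the assertion that the \emph{total} number of firings performed over the whole dynamics is finite: once this is known, a global bound $t_0$ on the number of parallel steps follows immediately, since each step of $F$ performs at least one firing as long as the configuration is unstable. Write $n_t(v)$ for the number of times $v$ has fired up to time $t$; this is non-decreasing, and I set $n_\infty(v)=\lim_t n_t(v)\in\N\cup\{\infty\}$. Throughout I use that the total mass $N=\sum_{v}c(v)$ is conserved by $F$, so that $c_t(v)\le N$ for every vertex $v$ and every time $t$.

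The first structural step is to show that the set $S=\{v : n_\infty(v)=\infty\}$ of vertices firing infinitely often is closed under taking out-neighbours. Indeed, if $v\in S$ and $u\in\neighbors^+(v)$, then $u$ receives at least $n_t(v)\to\infty$ grains from $v$; were $u$ to fire only finitely often, its content $c_t(u)$ would equal its initial value plus this diverging number of received grains minus the bounded quantity $p\,n_t(u)$, hence tend to infinity, contradicting $c_t(u)\le N$. Thus all of $\neighbors^+(v)$ lies in $S$. Since $\neighborhood^+$ contains two non-collinear movement vectors $\vec{u},\vec{v}$, from any $w\in S$ the forward-closure contains every $w+a\vec{u}+b\vec{v}$ with $a,b\in\N$, an infinite set of distinct vertices. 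Hence a nonempty forward-closed set is necessarily infinite, and in particular $S$ is either empty or infinite.

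The crux of the argument is the second step: that the set $W=\{v : n_\infty(v)\ge 1\}$ of vertices that ever fire is finite, i.e.\ the firing support stays bounded. Granting this, the proof concludes cleanly: $S\subseteq W$ would then be finite, but $S$ is empty or infinite, so $S=\emptyset$; therefore every vertex fires only finitely often, and the total number of firings is $\sum_{v\in W}n_\infty(v)<\infty$, which furnishes the desired $t_0$ with $F^{t_0}(c)$ stable. So everything rests on establishing boundedness of the firing support.

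I expect this boundedness to be the main obstacle, and it is exactly where the two non-collinear directions are essential. When the movement vectors all lie in some open half-plane there is a linear functional $\ell$ positive on every $\vec{w}\in\neighborhood^+$, and one can finish quickly: an $\ell$-minimal firing vertex is starved (all its in-neighbours lie strictly below it, hence are never fed into it in a way that sustains it) and cannot fire beyond what its bounded incoming mass allows, which caps how far firings propagate against $\ell$. The genuinely harder regime is when the movement vectors positively span $\R^2$: there the redistribution has no preferred ``backward'' wall, and I must argue that a fixed finite mass $N$ cannot sustain firings arbitrarily far out, because every firing requires a local concentration of at least $p=|\neighborhood^+|\ge 2$ grains while the two independent directions force the mass to genuinely spread in the plane, driving the maximal per-site content below the threshold $p$ outside a bounded region. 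I would formalise this via a spreading (local-limit-type) estimate on the iterated redistribution, or else invoke the classical finite-mass stabilisation theorem for abelian sandpiles on $\Z^2$. Finally, the abelian property of the model lets me move freely between the parallel dynamics $F$ and a convenient sequential firing order when carrying out these estimates.
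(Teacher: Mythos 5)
Your overall reduction is logically sound, and your first step is actually correct and arguably cleaner than the paper's corresponding case: the mass-conservation argument that the set $S$ of infinitely-firing vertices is closed under out-neighbours, hence empty or infinite, is a nice way to dispose of vertices firing infinitely often. But there is a genuine gap exactly where you anticipate one: you never prove that the set $W$ of vertices that ever fire is finite, and your entire proof rests on that. Deferring to ``the classical finite-mass stabilisation theorem'' is not available here --- in its standard form that theorem concerns the symmetric nearest-neighbour sandpile on $\Z^d$, whereas $\neighborhood^+$ is an arbitrary finite, possibly non-symmetric set, and in any case finiteness of the firing set is essentially the content of the lemma being proved. The ``spreading (local-limit-type) estimate'' in the positively-spanning regime is only named, not carried out; and even the half-plane sketch is incomplete, since a linear functional $\ell$ positive on all movement vectors only blocks propagation against $\ell$, not arbitrarily far forward or sideways.

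The missing idea --- and the one the paper actually uses --- is a backward induction from a would-be distant firing vertex. Let $R$ be a finite rectangle containing all grains of $c$. A vertex $v$ outside $R$ has $c(v)=0$ and exactly $p=|\neighborhood^+|$ in-neighbours, so to accumulate $p$ grains it needs its in-neighbours to fire strictly before it; the paper argues that in particular both $v-\vec{u}$ and $v-\vec{v}$ must fire strictly before $v$. Iterating, $v-\vec{u}$ in turn requires $v-2\vec{u}$ and $v-\vec{u}-\vec{v}$ to fire first, and so on. Because $\vec{u}$ and $\vec{v}$ are non-collinear, if $v$ is far enough from $R$ then at least one of the rays $\{v-k\vec{u}\}_{k\in\N}$ or $\{v-k\vec{v}\}_{k\in\N}$ avoids $R$ entirely, producing an infinite chain of vertices each of which must fire strictly before the previous one; this contradicts $v$ firing at any finite time. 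Hence no vertex far from $R$ ever fires and $W$ is finite. Without this (or an equivalent) argument your proposal does not establish the lemma.
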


\begin{proof}
  For the contradiction, suppose that $c$ never converges to a stable configuration. Then there exists an infinite sequence of vertices $(v_j)_{j \in \N}$ that fire. We consider two cases: either there exists a vertex that fires infinitely often, or there exists an infinity of different vertices that fire.

  If there exists a vertex $v$ that occurs infinitely often in $(v_j)_{j \in \N}$, then $v$ sends an infinity of grains to vertex $v+\vec{u}$, which sends an infinity of grains to $v+2\vec{u}$, etc. This contradicts the fact that there are finitely many grains in $c$ (the number of grains is constant throughout the evolution), since those grains never come back to $v$.

  If there exists an infinity of different vertices that fire, then let us consider a rectangle $R$ of finite size such that any vertex $v$ outside $R$ is such that $c(v)=0$. Any vertex $v$ outside $R$ that is fired at time step $t_1$ needs all its in-neighbors to be fired strictly before it in order to be fired. In particular, it requires vertices $v-\vec{u}$ and $v-\vec{v}$ to be fired strictly before it. By induction, since $\vec{u}$ and $\vec{v}$ are not collinear, if $v$ is far enough from $R$  (we supposed that that there are infinitely many different vertices that fire) then there exist one of $\vec{u}$ and $\vec{v}$ such that $\{ v-k\vec{u} ~|~ k \in \N\} \cap R = \emptyset$ or $\{ v-k\vec{v} ~|~ k \in \N\} \cap R = \emptyset$, which is an infinite sequence of vertices that all need to be fired strictly before the other, which contradicts the fact that $v$ is fired at some finite time step $t_1$.
\end{proof}

Lemma \ref{lemma:converge} allows to study any sequential evolution (where one unstable vertex is non-deterministically chosen to fire at each time step) of the abelian sandpile model, since when the dynamics converges to a stable configuration, any sequential evolution converges to the same stable configuration and every vertex is fired exactly the same number of times (this fact is related to the {\em abelian} property, see for example \cite{2008-LevinePeresPropp-CFGRotorRouterDirectedGraphs}).

Finaly, there is a natural notion of addition among configurations on the same set of vertices. Given $c,c'$ two configurations on some set of vertices $V$, we define the configuration $(c+c')$ as $(c+c')(v)=c(v)+c'(v)$ for all $v \in V$.

\subsection{Shape of neighborhood}

A shape will be defined as a continuous area in $\R^2$, that can be placed on the grid to get a discrete neighborhood $\neighborhood^+$ that defines a graph $G^{\neighborhood^+}$ for the abelian sandpile model.

\begin{definition}[shape]
  \label{def:shape}
  A {\em shape} (at $(0,0)$) is a bounded set $s^+ \subset \R^2$. We define the {\em neighborhood $\neighborhood^+_{s^+,r}$ of shape $s^+$} (with the firing cell at $(0,0)$) {\em with scaling ratio $r \in \R$}, $r > 0$, as 
  $$\neighborhood^+_{s^+,r} = \{(x,y) \in \Z^2 ~|~ (x/r,y/r) \in s^+\} \setminus \{(0,0)\}.$$
  We also have {\em movement vectors} $\vec{v}$ such that $(0,0)+\vec{v} \in s^+$, and denote $s^+(v) = s^+ + \vec{v}$.
\end{definition}

A {\em partition} $S^1, S^2, \dots, S^k$ of a set $S$ (either in $\Z^2$ or in $\R^2$) is such that $\bigcup_{i=1}^{k}{S^i} = S$ and for all $i \neq j$, $S_i \cap S_j = \emptyset$. Given a neighborhood (resp. a shape) $S$, $S_i$ is called a {\em subneighborhood} (resp. a {\em subshape}) of $S$.

%\begin{definition}
%  A shape $s$ is connected if there does not exist a partition of $s$ such that there are two distinct subsets.
%\end{definition}

We recall Remark \ref{remark:self-loops}: self-loops are removed from the dynamics. A shape is bounded so that its corresponding neighborhoods are finite ({\em i.e.} there is a finite number of neighbors). An example of shape is given on Figure \ref{fig:shape}.

\begin{figure}[!h]
  \centering \includegraphics{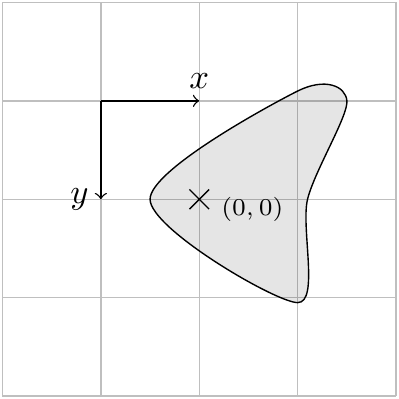} \hspace*{1cm} \includegraphics{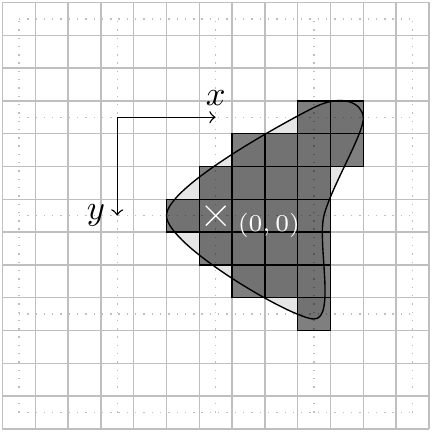}
  \caption{A shape $s^+$ on $\R^2$ (left, a discrete grid is displayed to see the 1:1 scale of the shape), and the neighborhood $\neighborhood^+_{s^+,3}$ (right, dotted lines reproduce the original grid from the left picture, and the discrete neighborhood in $\Z^2$ is darken).}
  \label{fig:shape}
\end{figure}

\begin{remark}
  A given neighborhood $\neighborhood^+ \subset Z^2$ always corresponds to an infinity of shapes and scaling ratio: for all $\neighborhood^+$, $|\{(s^+,r) ~|~ \neighborhood^+_{s^+,r}=\neighborhood^+ \}|=+\infty$.
\end{remark}

The notion of inverse shape and inverse neighborhood will be of interest in the analysis of Section \ref{s:study}: it defines the set of cells which have a given cell in their neighborhood (the neighboring relation is not symmetric).

\begin{definition}[inverse]
  The inverse $\neighborhood^{-}$ (resp. $s^{-}$) of a neighborhood $\neighborhood^+$ (resp. of a shape $s^+$) is defined via the central symmetry around $(0,0)$,
  $$\neighborhood^{-}=\left\{ (x,y) \in \Z^2 ~|~ (-x,-y) \in \neighborhood^+ \right\} \text{ and } s^{-}=\left\{ (x,y) \in \R^2 ~|~ (-x,-y) \in s^+ \right\}.$$
\end{definition}

\begin{remark}
   For any shape $s^+$ and any ratio $r>0$, we have $\neighborhood^{-}_{s^+,r}=\neighborhood^+_{s^{-1},r}$.
\end{remark}

We also have the inverse shape $s^{-}(v)$ at any point $v \in \R^2$ and the inverse neighborhood $\neighborhood^{-}(v)$ at any point $v \in \Z^2$. For any $u,v \in \Z^2$ (resp. $\R^2$),
$$v \in \neighborhood^+(u) \iff u \in \neighborhood^{-}(v) \text{ ~~~~~(resp. } v \in s^+(u) \iff u \in s^{-}(v) \text{)}.$$

We want shapes to have some thickness everywhere, as stated in the next definition. We denote $T_{(x,y),(x',y'),(x'',y'')}$ the triangle of coordinates $(x,y),(x',y'),(x'',y'') \in \R^2$.

\begin{definition}[non-flat shape]
  We say that a shape $s^+$ is {\em non-flat} when for every point $(x,y) \in s^+$ there exist $(x',y'), (x'',y'') \in \R^2$ such that the triangle $T_{(x,y),(x',y'),(x'',y'')}$ has a strictly positive area ({\em i.e.} the three points are not aligned), and entirely belongs to $s^+$.
\end{definition}

\subsection{Crossing configuration}

These definitions are inspired by \cite{2006-Goles-CrossingInfo2DBTWSandpile}.

A {\em crossing configuration} will be a finite configuration, and for convenience with the definition we take it of size $n \times n$ for some $n \in \N$, with non-empty vertices inside the square from $(0,0)$ to $(n-1,n-1)$ (see Figure \ref{fig:orientation}). The idea is to be able to add a grain on the west border to create a chain of reactions that reaches the east border, and a grain on the north border to create a chain of reactions that reaches the south border.

\begin{figure}[!h]
  \centering \includegraphics{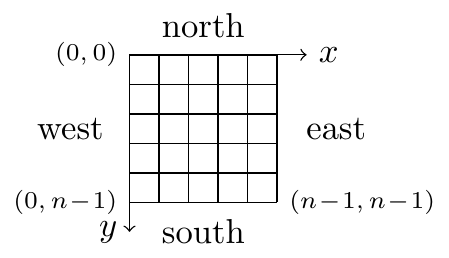}
  \caption{Orientation and positioning of an $n \times n$ square ($n=5$).}
  \label{fig:orientation}
\end{figure}

Let $E_n \subset \{0,1\}^n$ be the set of vectors that contain exactly one value 1, that is $E_n=\{ \vec{e} \in \{0,1\}^n ~|~ \exists ! i \text{ such that } \vec{e}(i)=1 \}$. In order to convert vectors to configurations, we define four positionings of a given vector $\vec{e} \in \{0,1\}^n$: $N(\vec{e})$, $W(\vec{e})$, $S(\vec{e})$ and $E(\vec{e})$ are four configurations of size $n \times n$, defined as
$$
  \begin{array}{rcl}
  N(\vec{e}) &:& (x,y) \mapsto \begin{array}\{{ll}.\vec{e}(x) & \text{if } y=0\\0 & \text{otherwise}\end{array}\\
    E(\vec{e})) &:& (x,y) \mapsto \begin{array}\{{ll}.\vec{e}(y) & \text{if } x=n-1\\0 & \text{otherwise}\end{array}\\
    S(\vec{e}) &:& (x,y) \mapsto \begin{array}\{{ll}.\vec{e}(x) & \text{if } y=n-1\\0 & \text{otherwise}\end{array}\\
    W(\vec{e}) &:& (x,y) \mapsto \begin{array}\{{ll}.\vec{e}(y) & \text{if } x=0\\0 & \text{otherwise}\end{array}
  \end{array}
$$

\begin{definition}[transporter]
  We say that a finite configuration $c$ of size $n \times n$ is a {\em transporter from west to east with vectors} $\vec{w},\vec{e} \in E_n$ when
  \begin{enumerate}
    \item $c$ is stable;
    \item $\exists t \in \N,~ \unstab(F^t(c+W(\vec{w})))=\{ v \in \Z^2 ~|~ E(\vec{e})(v)=1 \}$.
  \end{enumerate}
  We define symmetrically a configuration that is a {\em transporter from north to south with vectors} $\vec{n},\vec{s} \in E_n$ when
  \begin{enumerate}
    \item $c$ is stable;
    \item $\exists t \in \N,~ \unstab(F^t(c+N(\vec{n})))=\{ v \in \Z^2 ~|~ S(\vec{s})(v)=1 \}$.
  \end{enumerate}
\end{definition}

Besides transport of a signal (implemented via firings) from one border to the other (from west to east, and from north to south), a proper crossing of signals must not fire any cell on the other border: the transport from west to east must not fire any cell on the south border, and the transport from north to south must not fire any cell on the east border. This is the notion of isolation presented in the next definition.

\begin{definition}[isolation]
  We say that a finite configuration $c$ of size $n \times n$ has {\em west vector $\vec{w} \in E_n$ isolated to the south} when
  \begin{enumerate}
    \item $\forall t \in \N,~ Act(F^t(c+W(\vec{w}))) \cap \{ (x,y) ~|~ y=n-1 \} = \emptyset$.
  \end{enumerate}
  We define symmetrically a configuration that has {\em north vector $\vec{n} \in E_n$ isolated to the east} when
  \begin{enumerate}
    \item $\forall t \in \N,~ Act(F^t(c+N(\vec{n}))) \cap \{ (x,y) ~|~ x=n-1 \} = \emptyset$.
  \end{enumerate}
\end{definition}

\begin{definition}[crossing configuration]
  A finite configuration $c$ of size $n \times n$ is a {\em crossing with vectors $\vec{n},\vec{e},\vec{s},\vec{w} \in E_n$} when
  \begin{enumerate}
    \item $c$ is stable;
    \item $c$ is a transporter from west to east with vectors $\vec{w},\vec{e}$;
    \item $c$ has west vector $\vec{w}$ isolated to the south;
    \item $c$ is a transporter from north to south with vectors $\vec{n},\vec{s}$;
    \item $c$ has north vector $\vec{n}$ isolated to the north.
  \end{enumerate}
\end{definition}

\begin{definition}[crossing neighborhood]
  We say that a neighborhood $\neighborhood^+$ {\em can perform crossing} if there exists a crossing configuration in the abelian sandpile model on $G^{\neighborhood^+}$.
\end{definition}

Figure \ref{fig:crossing-vn2} shows an example of crossing configuration for von Neumann neighborhood of radius two.

\begin{figure}
  \centering \raisebox{1.5cm}{\includegraphics{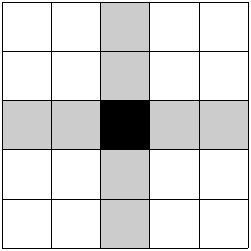}} \hspace*{2cm} \includegraphics{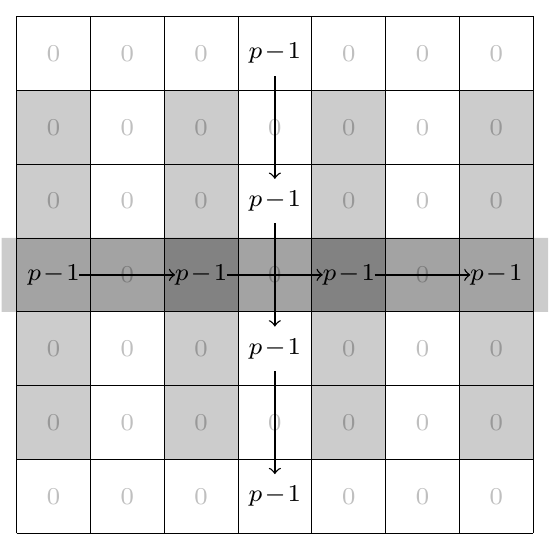}
  \caption{Von Neumann neighborhood (left) is greyed, with the cell at $(0,0)$ in black. Crossing configuration (right) for von Neumann neighborhood of radius two, of size $7 \times 7$ with vectors $\vec{n}=\vec{e}=\vec{s}=\vec{w}=(0,0,0,1,0,0,0) \in E_7$. Neighborhoods of firing cells in the west to east transport are greyed: we can notice that no vertex of the north to south transport will be fired, hence it is isolated.}
  \label{fig:crossing-vn2}
\end{figure}

\begin{definition}[shape ultimately crossing]
  We say that a shape $s^+$ {\em can ultimately perform crossing} if there exists a ratio $r_0 \in \R$ such that for all $r \in \R$, $r \geq r_0$, the neighborhood $\neighborhood^+_{s^+,r}$ can perform crossing.
\end{definition}

As mentioned at the beginning of this subsection, the definition of crossing configuration can be generalized as follows.

\begin{remark}
  Crossings can be performed in different orientations (not necessarily from the north border to the south border, and from the west border to the east border), the important property of the chosen borders is that the crossing comes from two {\em adjacent borders}, and escapes toward the two {\em mirror borders} (the mirror of north being south, the mirror of west being east, and reciprocally). It can also be delimited by a rectangle of size $n \times m$ for some integers $n$ and $m$, instead of a square.
\end{remark}

Adding one grain on a border of some stable configuration ensures that the dynamics converges in linear time in the size of the stable configuration, as stated in the next lemma.

\begin{lemma}
  \label{lemma:one}
  Let $c$ be a finite configuration of size $n \times m$, then for any $\vec{w} \in E_n$, every vertex is fired at most once during the evolution from $c+W(\vec{w})$ to a stable configuration.
\end{lemma}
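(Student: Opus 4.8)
\medskip\noindent\textbf{Proof plan.}
The plan is to fix an arbitrary sequential firing order and prove by induction on time that no vertex ever fires a second time. First I record the setup: since $c$ is stable, every vertex holds at most $p-1$ grains, where $p=|\neighborhood^+|=|\neighborhood^-|$ is also the common in- and out-degree of every vertex; and $W(\vec{w})$ deposits exactly one extra grain, on a single west-border cell that I call $v_0$. By the abelian property recalled after Lemma~\ref{lemma:converge}, every vertex is fired the same number of times in every sequential evolution, so it suffices to reason along one fixed sequential order $a_1,a_2,\dots$, where $a_t$ denotes the vertex fired at step $t$, and to prove by induction on $t$ that $a_1,\dots,a_t$ are pairwise distinct.

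The engine of the argument is the following counting bound, which I would establish using only the induction hypothesis (distinctness of the firings performed so far). Suppose $v$ is fired for the first time at step $s$. Just before this firing, $v$ holds $c(v)+W(\vec{w})(v)+b_v$ grains, where $b_v$ is the number of in-neighbors of $v$ fired before step $s$, each contributing one grain since by hypothesis they have fired at most once up to now. Firing removes $p$ grains, and afterwards $v$ can receive at most $p-b_v$ further grains up to any later step reached so far, because its $p$ in-neighbors each fire at most once among the firings performed and $b_v$ of them already have. Hence, at every such step, the number of grains on $v$ is at most $(c(v)+W(\vec{w})(v)+b_v)-p+(p-b_v)=c(v)+W(\vec{w})(v)$.

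For any vertex $v\neq v_0$ we have $W(\vec{w})(v)=0$, so this bound reads $c(v)\le p-1<p$: once fired, $v$ never again reaches the firing threshold, and therefore cannot reappear in the firing sequence. This already settles every non-source vertex. For the source $v_0$ the bound only gives $c(v_0)+1\le p$, and a second firing of $v_0$ would force equality throughout, hence $c(v_0)=p-1$ together with the requirement that \emph{all} $p$ in-neighbors of $v_0$ be fired.

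To exclude this last possibility I would argue by infinite descent, in the spirit of the proof of Lemma~\ref{lemma:converge}. Because the movement vectors are non-collinear, I can pick a movement vector $\vec{m}$ for which the ray $\{\,v_0-k\vec{m} : k\ge 1\,\}$ leaves the $n\times m$ square and never re-enters it, so every cell on this ray starts empty. If $v_0$ were fired twice, then $v_0-\vec{m}$ would be fired; being an empty cell outside the square, and since each of its $p$ in-neighbors fires at most once, it can reach the threshold only if all of them---in particular $v_0-2\vec{m}$---were fired strictly earlier. Iterating produces an infinite strictly decreasing sequence of firing times, contradicting finiteness. I expect this source/boundary case to be the main obstacle: the clean per-vertex counting bound degenerates exactly at the single cell receiving the extra grain, and eliminating its second firing needs the geometric finiteness argument---choosing the escape direction and checking that the descending ray stays outside the initial square---rather than pure local bookkeeping.
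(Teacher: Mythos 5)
Your counting core is correct and is essentially the paper's own argument: the paper also proceeds by induction on time, observing that a vertex starting from a stable height needs at least $p+1$ incoming grains to fire twice while its $p$ in-neighbors can supply at most $p$ under the induction hypothesis, which isolates the single cell $v_0$ carrying the extra grain of $W(\vec{w})$ as the only possible exception; your bookkeeping for that part is, if anything, more careful than the paper's. The genuine gap is in your last step, where you exclude a second firing of $v_0$. You assert that non-collinearity of the movement vectors lets you pick $\vec{m}$ such that the whole ray $\{v_0-k\vec{m} : k\ge 1\}$ lies outside the $n\times m$ rectangle, so that every cell on it starts empty. This is false in general. Take $\neighborhood^+=\{(-1,0),(0,-1),(-1,-1)\}$ (which has two non-collinear movement vectors) and $v_0=(0,y)$ on the west border with $y<m-1$: the three candidates for $v_0-\vec{m}$ are $(1,y)$, $(0,y+1)$ and $(1,y+1)$, all inside the rectangle, so none of your rays even starts outside, the cells on them may carry up to $p-1$ grains, and the descent does not get off the ground. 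This is exactly the situation where every in-neighbor of $v_0$ lies inside the rectangle, which is the one case the argument must handle.

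The repair is a case distinction. If some movement vector $\vec{m}=(m_x,m_y)$ has $m_x>0$, then every $v_0-k\vec{m}$ has negative $x$-coordinate, hence lies outside the rectangle and starts empty, and your infinite-descent argument applies verbatim to that $\vec{m}$. Otherwise every movement vector has $m_x\le 0$; since $v_0$ is the only unstable cell of $c+W(\vec{w})$, every other fired vertex must be reached from $v_0$ by a chain of arcs, i.e.\ equals $v_0$ plus a non-empty sum of movement vectors, and therefore has $x$-coordinate at most $0$. Non-collinearity supplies a movement vector $\vec{v}$ with $v_x<0$ (they cannot all lie on the vertical axis), and the corresponding in-neighbor $v_0-\vec{v}$ has $x$-coordinate $-v_x>0$, so it never fires; hence not all $p$ in-neighbors of $v_0$ fire and the equality case of your bound is excluded. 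I would add that the paper's own proof is terse at exactly this point (it asserts without further justification that border vertices receive at most $p-1$ grains), so you have correctly located the real difficulty of the lemma; you just need the above case analysis to close it.
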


Lemma \ref{lemma:converge} ensures that $c+W(\vec{w})$ converges to a stable configuration.

\begin{proof}
  The result comes from the following invariant: {\em (i)} any vertex inside the rectangle of size $n \times m$ is fired at most once; {\em (ii)} any vertex outside the rectangle of size $n \times m$ is never fired. Indeed, the invariant is initially verified, and by induction on the time $t$: {\em (i)} according to Equation (\ref{eq:sandpile}), a vertex needs to receive at least $p+1$ grains from its in-neighbors in order to fire twice. However, by induction hypothesis any vertex inside the rectangle receives at most $p$ grains, and vertices on the border receive at most $p-1$ grains, therefore even the vertex that receives the additional grain (from $W(\vec{w})$) does not receive enough grains to fire twice; {\em (ii)} vertices outside the rectangle have initially no grain and receive at most one grain, so they cannot fire.
\end{proof}

%%%%%%%%%%%%%%%%%%%%%%%%%%
\section{Known results}
\label{s:known}

As mentioned in the introduction, proofs of $\Poly$-completeness via reductions from MCVP relate the ability to perform crossing to the computational complexity of the prediction problem.

Regarding the classical neighborhoods of von Neumann (in dimension $d$ each cells has $2d$ neighbors corresponding to the two direct neighbors in each dimension, for example in dimension two the four neighbors are the north, east, south, and west cells) and Moore (von Neumann plus the diagonal cells, hence defining a square in two dimensions, a cube in three dimensions, and an hypercube in upper dimensions), it is known that the prediction problem is in $\NC$ in dimension one (\cite{1999-Moore-complexitySandpiles}), and $\Poly$-complete in dimension at least three (\cite{2006-Goles-CrossingInfo2DBTWSandpile}, via a reduction from MCVP in which it is proven that they can perform crossing). Whether their prediction problem is in $\NC$ or $\Poly$-complete in dimension two is an open question, though we know that they cannot perform crossing (\cite{2006-Goles-CrossingInfo2DBTWSandpile}).

More general neighborhoods have also been studied, such as Kadanoff sandpile models for which it has been proven that the prediction problem is in $\NC$ in dimension one (\cite{2010-GolesMartin-KSPMAP}, improved in \cite{2014-FormentiPerrotRemila-KadanoffSandpileAvalancheProblemDimensionOne} and generalized to any decreasing sandpile model in \cite{2017-FormentiPerrotRemila-DecreasingSandpileAvalancheProblemDimensionOne}), and $\Poly$-complete in dimension two when the radius is at least two (via a reduction from MCVP in which it is proven that it can perform crossing).

Threshold automata (including the majority cellular automata on von Neumann neighborhood in dimension two, which prediction problem is also not known to be in $\NC$ or $\Poly$-complete) are closely related, it has been proven that it is possible to perform crossing on undirected planar graphs of degree at most five (\cite{2013-GolesMontealegreTodinca-ComplexityBootstrapingPercolation}, hence hinting that degree four regular graph, {\em i.e.} such that $V=\Z^2$, is the most relevant case of study). The link between the ability to perform crossing and the $\Poly$-completeness of the prediction problem has been formally stated in \cite{2017-GolesMontealegrePerrotTheyssier-ComplexityDimensionTwoMajorityCellularAutomata}.

\section{Study of neighborhood, shape and crossing}
\label{s:study}

\subsection{Distinct firing graphs}

A firing graph is a useful representation of the meaningful information about a crossing configuration: which vertices fires, and which vertices trigger the firing of other vertices.

\begin{definition}[firing graph, from \cite{2006-Goles-CrossingInfo2DBTWSandpile}]
  Given a crossing configuration $c$ with vectors $\vec{n},\vec{e},\vec{s},\vec{w}$, we define the two {\em firing graphs} $G_{we}=(V_{we},A_{we})$, $G_{ns}=(V_{ns},A_{ns})$ as the directed graphs such that:
  \begin{itemize}
    \item $V_{we}$ (resp. $V_{ns}$) is the set of fired vertices in $c+W(\vec{w})$ (resp. $c+N(\vec{n})$);
    \item there is an arc $(v_1,v_2) \in A_{we}$ (resp. $\in A_{ns}$) when $v_1,v_2 \in V_{we}$ (resp. $\in V_{ns}$) and $v_1$ is fired strictly before $v_2$.
  \end{itemize}
\end{definition}

In this section we make some notations a little more precise, by subscripting the degree and set of neighbors with the digraph it is relative to. For example $\degree^+_G(v)$ denotes the out-degree of vertex $v$ in digraph $G$.

The following result is correct on all Eulerian digraph $G$ ({\em i.e.} a digraph such that $\degree^+_{G}(v) = \degree^-_{G}(v)$ for all vertex $v$), which includes the case of a uniform neighborhood on the grid $\Z^2$.

\begin{proposition}\label{prop:distinct}
  Given an Eulerian digraph $G$ for the abelian sandpile model, if there exists a crossing then there exists a crossing with firing graphs $G'_1=(V'_1,A'_1)$ and $G'_2=(V'_2,A'_2)$ such that $V'_1 \cap V'_2 = \emptyset$.
\end{proposition}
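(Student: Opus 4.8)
The plan is to fix the underlying Eulerian graph $G$, take any crossing configuration $c$ with vectors $\vec n,\vec e,\vec s,\vec w$, and argue by induction on the size of the overlap $I = V_{we} \cap V_{ns}$ of the two firing-graph vertex sets. First I would record that by Lemma \ref{lemma:one} each vertex fires at most once in each of the two transports, so $V_{we}$ and $V_{ns}$ are exactly the fired sets, each firing graph is acyclic (arcs are oriented by strict firing time), and the desired conclusion $V'_1 \cap V'_2 = \emptyset$ is precisely $I = \emptyset$. If $I$ is already empty there is nothing to do; otherwise I would pick a single shared vertex $v \in I$ and aim to produce a new crossing in which $v$ no longer fires in one of the two transports, strictly decreasing $|I|$ and eventually reaching $\emptyset$.

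The engine would be a grain-conservation (discrete Laplacian) identity combined with monotonicity of the odometer. Firing exactly the vertices of $V_{we}$ once changes the configuration by $-\degree^+_G(v)$ on each fired $v$ plus one grain per fired in-neighbor; because $G$ is Eulerian, $\degree^+_G = \degree^-_G$ everywhere (which holds automatically for every uniform neighborhood on $\Z^2$), so these local balances are symmetric in in- and out-neighbors and, summed over a subset, measure exactly the net grains crossing its boundary. I would use this to quantify the surplus of grains at $v$ above its threshold in each transport, then perform a local surgery: keeping $c$ fixed away from a neighborhood of $v$ and raising grain counts only on an alternative, still-stable set of cells relevant to a single transport, so that that transport's signal reaches $v$'s successors directly — hopping over $v$ via the range of the neighborhood — while $v$ itself no longer accumulates the threshold there. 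Monotonicity of the odometer then guarantees this removes $v$ from that transport without introducing firings elsewhere.

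The main obstacle, and where I expect almost all the work to lie, is verifying that the surgered configuration $c'$ is still a crossing: that $c'$ remains stable, that the untouched transport is unaffected (here the two isolation hypotheses — west-to-east never fires the south border, north-to-south never fires the east border — are what let me alter cells relevant to one signal without disturbing the other), and above all that the rerouted transport still reaches \emph{exactly} its prescribed output $\vec e$ (resp. $\vec s$) and stays isolated. This exact-output requirement is delicate, since rerouting a signal could in principle leak firings onto additional border cells; I would certify it with a second application of the Eulerian identity, checking that the rerouted firing set carries the same boundary signature as the original. Stability and preservation of the untouched transport I expect to be routine, whereas pinning the rerouted signal down to the correct single output cell while maintaining isolation is the step that will demand the most care.
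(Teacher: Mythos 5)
There is a genuine gap, and it sits exactly where you place the ``engine'' of your argument. Your surgery proposes to keep $c$ fixed away from $v$ and to \emph{raise} grain counts on an alternative set of cells so that the signal hops over $v$ ``while $v$ itself no longer accumulates the threshold there.'' But the abelian/monotonicity property you invoke works against you: adding grains anywhere can only enlarge the set of fired vertices, so no amount of raising counts elsewhere can prevent $v$ from reaching threshold if it did before. To stop $v$ from firing you must \emph{remove} grains from $v$ (or starve it upstream), and since the configuration is the same object in both experiments, lowering $c(v)$ affects the west--east and the north--south transports simultaneously. Your induction, which tries to delete $v$ from only one of the two transports at a time, therefore has no clean move available: there is no reason the two transports deliver different numbers of grains to $v$, so you cannot calibrate a decrease of $c(v)$ that kills $v$ in one firing graph but spares it in the other. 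You also leave open whether the rerouted signal creates new shared vertices, which would break the claim that $|I|$ strictly decreases.

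The paper resolves this by treating all of $I=V_1\cap V_2$ at once and symmetrically: it sets $c'(v)=0$ for every $v\in V_1\cap V_2$, and compensates each out-neighbor lying in exactly one of the two firing graphs by adding precisely $\vert \neighbors^-_{G_i}(v)\cap(V_1\cap V_2)\vert$ grains, i.e.\ one grain per in-neighbor it loses. This keeps every vertex of $V_i\setminus(V_1\cap V_2)$ firing (your ``exact-output'' worry is handled because the surviving firing graphs are induced subgraphs containing the border vertices), and the Eulerian hypothesis is used not for a boundary-flux identity but to show that a zeroed vertex needs \emph{all} its in-neighbors to fire strictly before it, whence a chain of such dependencies inside $V_1\cap V_2$ would close into a directed cycle of vertices each required to fire strictly before the previous one --- impossible. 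That simultaneous, both-graphs-at-once deletion with exact local compensation is the idea your proposal is missing; without it the per-vertex, per-transport surgery does not go through.
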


\begin{proof}
  The proof is constructive and follows a simple idea: if a vertex is part of both firing graphs, then it is not useful to perform the crossing, and we can remove it from both firing graphs.
  
  {\bf Goal.} Let $c$ be a configuration which is a crossing, and $G_1=(V_1,A_1), G_2=(V_2,A_2)$ its two firing graphs. We will explain how to construct a configuration $c'$ such that the respective firing graphs $G'_1=(V'_1,A'_1)$ and $G'_2=(V'_2,A'_2)$ verify:
  \begin{itemize}
    \item $V'_1=V_1 \setminus (V_1 \cap V_2)$;
    \item $V'_2=V_2 \setminus (V_1 \cap V_2)$.
  \end{itemize}
  This ensures that $V'_1 \cap V'_2 = \emptyset$, the expected result.

  {\bf Construction.} The construction applies two kinds of modifications to the original crossing $c$: it removes all the grains from vertices in the intersection of $G_1$ and $G_2$ so that they are not fired anymore, and adds more sand to their out-neighbors so that the remaining vertices remain fired. Formally, the configuration $c'$ is identical to the configuration $c$, except:
  \begin{itemize}
    \item for all $v \in V_1 \cap V_2$ we set $c'(v)=0$;
    \item for all $v \in \left( \bigcup\limits_{v \in V_1 \cap V_2} \neighbors^+_{G_1}(v) \right) \setminus \left( \bigcup\limits_{v \in V_1 \cap V_2} \neighbors^+_{G_2}(v) \right)$,\\[.5em]
      we set $c'(v)=c(v)+\vert \neighbors^-_{G_1}(v) \cap (V_1 \cap V_2) \vert$;
    \item for all $v \in \left( \bigcup\limits_{v \in V_1 \cap V_2} \neighbors^+_{G_2}(v) \right) \setminus \left( \bigcup\limits_{v \in V_1 \cap V_2} \neighbors^+_{G_1}(v) \right)$,\\[.5em]
      we set $c'(v)=c(v)+\vert \neighbors^-_{G_2}(v) \cap (V_1 \cap V_2) \vert$.
    %\item If a vertex $v$ belongs to more that one of these sets, it means $v \in V_1 \cap V_2$ and we set $c'(v)=0$.
  \end{itemize}

  Let us now prove that $c'$ is such that its two firing graphs $G'_1$ and $G'_2$ verify the two claims, via the combination of the following three facts.
  
  {\bf Fact 1.} It is clear that no new vertex is fired: $V'_1 \subseteq V_1$ and $V'_2 \subseteq V_2$.
 
  {\bf Fact 2.} The vertices of $V_1 \cap V_2$ are not fired in $G'_1$ nor $G'_2$:
  $$V'_1 \cap (V_1 \cap V_2) = \emptyset \text{ and } V'_2 \cap (V_1 \cap V_2) = \emptyset.$$
  Let $v \in V_1 \cap V_2$ (then $c'(v)=0$), there are two cases.\\
  {\em Case 1: $\neighbors^-_{G_1}(v) < \neighbors^-_G(v)$ and $\neighbors^-_{G_2}(v) < \neighbors^-_G(v)$.} The claim is straightforward from the fact that we set $c'(v)=0$: vertex $v$ will not receive enough grains to fire (from Fact 1).\\
  {\em Case 2: $\neighbors^-_{G_1}(v) = \neighbors^-_G(v)$ or $\neighbors^-_{G_2}(v) = \neighbors^-_G(v)$.} Without loss of generality, let us suppose that $\neighbors^-_{G_1}(v) = \neighbors^-_G(v)$. Since all in-neighbors of $v$ in $G$ are fired in $G_1$, then there exists at least one vertex $v'$ that is an in-neighbor of $v$ in both $G_1$ and $G_2$ ($v' \in \neighbors^-_{G_1}(v) \cap \neighbors^-_{G_2}(v)$), otherwise $\neighbors^-_{G_2}(v)=\emptyset$ and $v$ would not be fired in $G_2$. Hence we have $v' \in V_1 \cap V_2$, and consequently $c'(v')=0$. Now, $v$ is fired only if $v'$ is fired strictly before it. Indeed, since $c'(v)=0$ and the graph is Eulerian ($\degree^+_{G}(v) = \degree^-_{G}(v)$), $v$ needs all its in-neighbors to fire strictly before it. The same reasoning applies to $v'$: either {\em (i)} not all its in-neighbors are fired in $G_1$ and $v'$ cannot fire (the claim holds), or {\em (ii)} all its in-neighbors are fired in $G_1$ and $v'$ requires another $v''$ to be fired strictly before it. Continuing this process, either we encounter a vertex in case {\em (i)}, or, as there is a finite number of vertices, the reasoning {\em (ii)} eventually involves twice the same vertex, creating a directed cycle of vertices which must all be fired strictly before their ancestor, which is impossible. The conclusion is that none of these vertices is fired. 

  {\bf Fact 3.} The vertices of $V_1$ (resp. $V_2$) which do not belong to $V_1 \cap V_2$ are still firing in $G'_1$ (resp. $G'_2$):
  $$V_1 \setminus (V_1 \cap V_2) \subseteq V'_1 \text{ and } V_2 \setminus (V_1 \cap V_2) \subseteq V'_2.$$
  By induction on the number of time steps required to fire all vertices of the firing graph $G_1$, we can see that each vertex of $v \in V_1 \setminus (V_1 \cap V_2)$ will also be fired in $G'_1$. Indeed, we can compute that the $\vert \neighbors^-_{G_1}(v) \cap (V_1 \cap V_2) \vert$ additional grains put on $v$ in configuration $c'$ will compensate for the in-neighbors of $v$ in $G_1$ that belong to $V_1 \cap V_2$ (which are not fired anymore in $G'_1$ according to Fact 2). Let us furthermore underline that each vertex of $V'_1$ still has an in-neighbor in $V'_1$: if $v \in V'_1$ then $v \notin V_2$ so $v$ should have at least one more in-neighbor which belongs to $V_1$ than to $V_2$, and this in-neighbor still belongs to $V'_1$ (since it belongs to $V_1 \setminus (V_1 \cap V_2)$). The argument for $G_2$, $G'_2$ is similar.
   
  {\bf Conclusion.} Finally, let us argue that $c'$ is indeed a crossing configuration. It is stable by construction (we cannot add more that $p-c(v)-1$ grains to some vertex $v$, otherwise it means that $v$ belongs to $V_1 \cap V_2$ and we set $c'(v)=0$); it is isolated because $G'_1$ and $G'_2$ are subgraphs of respectively $G_1$ and $G_2$ which were isolated (Fact 1); and it is a transporter because $G'_1$ and $G'_2$ are firing graphs and vertices on the north, east, south and west borders cannot belong to $V_1 \cap V_2$, therefore (Fact 3) $G'_1$ and $G'_2$ still connect two adjacent borders to the two mirror borders.
\end{proof}

We can restate Proposition \ref{prop:distinct} as follows: if crossing is possible, then there exists a crossing with two firing graphs which have no common firing cells. It is useful to prove that some small neighborhoods (of small size $p$) cannot perform crossing, as shown below with a different proof of the impossibility of crossing with von Neumann and Moore neighborhoods of radius one, which was proved in \cite{2006-Goles-CrossingInfo2DBTWSandpile}.

\begin{corollary}[\cite{2006-Goles-CrossingInfo2DBTWSandpile}]
  Von Neumann and Moore neighborhoods of radius one cannot cross.
\end{corollary}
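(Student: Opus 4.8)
The plan is to argue by contraposition using Proposition \ref{prop:distinct}. Suppose some crossing exists; then there is a crossing whose two firing graphs $G'_1=(V'_1,A'_1)$ and $G'_2=(V'_2,A'_2)$ satisfy $V'_1\cap V'_2=\emptyset$. First I would record two elementary structural facts about each firing region. By Lemma \ref{lemma:one} every fired cell fires exactly once, and since $c$ is stable each such cell starts with at most $p-1$ grains; because the neighborhood is Eulerian, a cell can fire only after at least one of its in-neighbors (which are exactly the neighborhood cells) has fired. Hence $V'_1$ is connected through the adjacency induced by $\neighborhood^+$ and links the west border (the cell carrying the extra grain of $W(\vec{w})$, which must fire to start the reaction, and which survives the reduction of Proposition \ref{prop:distinct} since border cells cannot lie in $V_1\cap V_2$) to the east border (the final unstable output cell, together with its in-neighbors just inside the border); symmetrically $V'_2$ is connected and links north to south. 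For von Neumann the relevant adjacency is $4$-connectivity, for Moore it is $8$-connectivity.

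Second, I would close the von Neumann case with the discrete Jordan-curve (Hex) duality: in a grid rectangle a cell set contains a $4$-connected path joining the left and right sides if and only if its complement contains no $8$-connected path joining the top and bottom sides. Since $V'_1$ is $4$-connected and joins west to east, its complement contains no $8$-connected north--south crossing; but $V'_2\subseteq \Z^2\setminus V'_1$ is $4$-connected, hence $8$-connected, and joins north to south, a contradiction. Thus von Neumann of radius one cannot cross.

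The hard part is Moore of radius one, where the same duality no longer closes: both $V'_1$ and $V'_2$ are only guaranteed to be $8$-connected, and the $(4,8)$ pairing above does not forbid two disjoint $8$-connected crossings. The obstruction is concrete: two disjoint $8$-connected paths, one joining west--east and one joining north--south, may meet at a single \emph{diagonal crossing}, i.e. a $2\times 2$ block in which $V'_1$ occupies one diagonal and $V'_2$ the other. I would localise this with the standard crossing lemma (the two center-lines joining opposite sides of the square must intersect, and a short case analysis on lattice segments shows that any intersection avoiding a common cell is exactly such a $2\times 2$ opposite-diagonal block), and then attempt to rule it out from the dynamics. The four cells of the block are pairwise Moore-adjacent, and the \emph{same} stable configuration must fire one diagonal during the west--east run while the other diagonal stays stable, and fire the other diagonal during the north--south run while the first stays stable.

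I expect this last step to be the main obstacle. A bare local grain count at the block is not enough: each of the four cells has five further neighbors outside the block, and these provide enough slack to satisfy all of the constraints ($c(v)\le p-1$, ``fires in one run'', ``does not fire in the other'') simultaneously. The resolution therefore has to use more than the block itself --- either a causal (firing-order) argument exposing an impossible ``must fire strictly before'' cycle among the four cells and their triggers, or a more global refinement that upgrades the available connectivity (for instance, showing that one of the two regions can always be taken $4$-connected, which would immediately reinstate the Hex duality). Making one of these refinements precise, while respecting the isolation constraints, is where the real work lies.
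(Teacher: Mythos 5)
Your von Neumann argument is fine and is a legitimate alternative to the paper's: the paper simply observes that after applying Proposition \ref{prop:distinct} an arc of $G_1$ must cross an arc of $G_2$, and that two unit axis-parallel segments with four distinct endpoints cannot have intersecting interiors; your Hex-duality route ($4$-connected west--east path versus $8$-connected north--south path in the complement) reaches the same conclusion with a more topological flavour. You have also correctly isolated the Moore obstruction: the only way two disjoint arcs can cross is the $2\times 2$ block in which $V'_1$ occupies one diagonal $\{h_1,h_2\}$ and $V'_2$ the other $\{v_1,v_2\}$ (Figure \ref{fig:crossing-moore}).

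However, you explicitly leave the Moore case unfinished, and that is a genuine gap: the statement is not proved without it. The missing idea in the paper is a grain count at $h_2$ combined with a minimality (``earliest crossing'') argument, not a cycle of firing precedences and not an upgrade of connectivity. Concretely: since the firing graphs are disjoint, $h_2\notin V'_2$, yet both $v_1$ and $v_2$ are in-neighbors of $h_2$ and fire during the north--south run without destabilising $h_2$; hence $c(h_2)\le p-3$, so in the west--east run $h_2$ needs at least three fired in-neighbors, say $h_1,h_3,h_4$, with $h_3,h_4\notin V'_2$. The geometric point is that the five candidate positions for $h_3,h_4$ all lie strictly on the same side of the line $(v_1,v_2)$ as $h_2$, i.e.\ on the far side from $h_1$. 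Choosing the crossing pair of arcs that occurs earliest, the paths of $G_1$ that must reach $h_3$ and $h_4$ before $h_2$ fires are then forced to cross the north--south firing graph strictly earlier, contradicting minimality. Your proposal gestures at ``a causal argument or a connectivity upgrade'' but does not supply this counting step, which is where the proof actually closes; as written, the Moore half of the corollary remains unestablished.
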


\begin{proof}[Alternative proof]
Assume that the neighborhoods can perform crossing. By Proposition \ref{prop:distinct}, there exists a crossing configuration so that the two firing graphs $G_1, G_2$ are distinct. Consider any arc $(v_1,v_2)$ of $G_1$ and any arc $(h_1,h_2)$ of $G_2$. The four vertices are distinct, there are two cases as follows: $(v_1,v_2)$ crosses $(h_1,h_2)$ ({\em i.e.} segments $]h_1, h_2[$ and $]v_1,v_2[$ intersect), or $(v_1,v_2)$ does not cross $(h_1,h_2)$. Because $G_1, G_2$ cross each other, it implies that there exist an arc of $G_1$ crossing an arc of $G_2$. 

  This is impossible for von Neumann neighborhood of radius one, which contradicts the assumption.

  \begin{figure}[!h]
    \centering \includegraphics{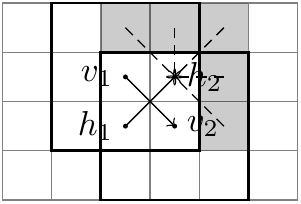}
    \caption{Only way of crossing two arcs $(h_1,h_2)$ and $(v_1,v_2)$ with Moore neighborhood of radius one. The neighborhoods of vertices $v_1$ and $v_2$ are drawn: $h_2$ belongs to both. The five darken cells are the remaining vertices that have $h_2$ in their neighborhood, among which at least two ($h_3$ and $h_4$) must be fired in $G_1$.}
    \label{fig:crossing-moore}
  \end{figure}
  
  Now consider Moore neighborhood of radius one. The arcs of $G_1$ cross that of $G_2$ with the form described in Figure \ref{fig:crossing-moore}. Suppose that $G_1$, resp $ G_2$ are started at $h_0$, resp $v_0$. Let $(v_1,v_2) \in E(G_2)$ be crossing $(h_1,h_2) \in G_1$ earliest in the crossing ({\em i.e.} $\forall \rho(h_0, h_1) \subset G_1, \forall \rho(v_0, v_1) \subset G_2$, there are no crossings between any arc $e_1 \in \rho(h_0, h_1)$ and an arc $e_2 \in \rho(v_0, v_1)$, with $\rho(u, v)$ a path from $u$ to $v$). Consider that $h_2 \in \neighborhood^{-}(v_1)$, $h_2 \in \neighborhood^{-}(v_2)$ and $G_1, G_2$ are distinct, then $G_1$ has at least three arcs to $h_2$ (including $(h_1,h_2)$), say $(h_1,h_2)$, $(h_3,h_2)$ and $(h_4,h_2)$ (obviously, $h_3, h_4 \notin V(G_2)$). With Moore neighborhood, $h_3$ and $h_4$ are on the same side with $h_2$ over line $(v_1,v_2)$. According to the definition of firing graph, $h_3$ and $h_4$ must belong to $\rho(h_0,h_1)$ because like $h_1$ they have an arc to $h_2$, but this is only possible if $\rho(h_0,h_1)$ crosses $\rho(v_0, v_1) \subset G_2$, a contradiction to the assumption that the crossing between $(v_1,v_2)$ and $(h_1,h_2)$ is the earliest.  
\end{proof}

\begin{remark}
  \label{remark:distinct}
  In the proof of Proposition \ref{prop:distinct}, given any crossing configuration $c$ of firing graphs $G_1=(V_1,A_1)$, $G_2=(V_2,A_2)$, we construct a crossing configuration $c'$ of firing graphs $G'_1=(V'_1,A'_1)$, $G'_2=(V'_2,A'_2)$ such that $G'_1$ (resp. $G'_2$) is the subgraph of $G_1$ (resp. $G_2$) induced by the set of vertices $V'_1 = V_1 \setminus (V_1 \cap V_2)$ (resp. $V'_2 = V_2 \setminus (V_1 \cap V_2)$).
\end{remark}

\subsection{Convex shapes and neighborhoods}

Proposition \ref{prop:distinct} is also convenient to give constraints on crossing configurations for some particular family of neighborhoods.

\begin{definition}[Convex shape]
  A shape $s^+$ is {\em convex} if and only if for any $u,v \in s^+$, the segment from $u$ to $v$ also belongs to $s^+$: $[u,v] \subset s^+$.
\end{definition}

\begin{definition}[Convex neighborhood]
  A neighborhood $\neighborhood^+$ is {\em convex} if and only if there exists a convex shape $s^+$ and ratio $r>0$ such that $\neighborhood^+_{s^+,r}=\neighborhood^+$.
\end{definition}

In the design crossing configurations, it is natural do try the simpler case first, which is to put $p-1$ grains on vertices we want to successively fire, and $0$ grain on other vertices. The following corollary states that this simple design of crossing configuration does not work if the neighborhood is convex.

\begin{corollary}
  For a convex neighborhood, a crossing configuration $c$ must have at least one firing vertex $v$ such that $c(v) \leq p-2$ grains.
\end{corollary}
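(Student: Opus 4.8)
The plan is to argue by contradiction: assume $\neighborhood^+$ is convex and that $c$ is a crossing all of whose firing vertices carry exactly $p-1$ grains. This is the only case to rule out, since $c$ is stable and so every firing vertex carries at most $p-1$ grains; if none carried at most $p-2$, they would all carry exactly $p-1$. I first want to reduce to vertex-disjoint firing graphs. Applying Proposition \ref{prop:distinct} produces a crossing $c'$ whose two firing graphs $G_1=(V_1,A_1)$ (west-to-east) and $G_2=(V_2,A_2)$ (north-to-south) satisfy $V_1\cap V_2=\emptyset$; moreover, under the all-$(p-1)$ hypothesis this construction adds \emph{no} grain, because the number of grains it puts on a surviving firing vertex $v$ is forced to be at most $p-1-c(v)=0$ (otherwise $v$ would be unstable, contradicting the stability of $c'$ proved in Proposition \ref{prop:distinct}). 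Hence $c'$ is again a crossing with every firing vertex at $p-1$ and with disjoint firing graphs, and I may rename $c:=c'$.

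Next I would extract a geometric crossing. As $c$ is a transporter from west to east, the causal chain of firings leading to the east sink gives a directed path of arcs (movement vectors) in $A_1$ joining the west border to the east border; symmetrically $A_2$ contains a path from the north border to the south border. Regarded as two polygonal curves inside the $n\times n$ square, one joining the vertical sides and the other the horizontal sides, they must intersect; since $V_1\cap V_2=\emptyset$ the intersection falls in the interiors of two arcs, so there exist $(h_1,h_2)\in A_1$ and $(v_1,v_2)\in A_2$ whose open segments $]h_1,h_2[$ and $]v_1,v_2[$ cross.

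The heart of the argument is then a geometric lemma: for a convex neighborhood, two crossing arcs always have an endpoint of one lying in $\neighborhood^+$ of an endpoint of the other. Writing the crossing point as $h_1+\mu(h_2-h_1)=v_1+\lambda(v_2-v_1)$ with $\lambda,\mu\in(0,1)$, both $\vec m_1=h_2-h_1$ and $\vec m_2=v_2-v_1$ are movement vectors, and one computes $h_2-v_1=\lambda\vec m_2+(1-\mu)\vec m_1$ and $v_2-h_1=(1-\lambda)\vec m_2+\mu\vec m_1$, two nonnegative combinations of $\vec m_1,\vec m_2$ whose coefficient sums add up to $2$. The one whose sum is at most $1$ is a scaling by a factor in $[0,1]$ of a convex combination of $\vec m_1,\vec m_2$, hence lies in the convex region defining $\neighborhood^+$; being a nonzero integer vector it is a movement vector. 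Thus, up to symmetry, $h_2\in\neighborhood^+(v_1)$, i.e. $v_1$ sends a grain to $h_2$. This closes the contradiction: $v_1\in V_2$ fires in the north-to-south dynamics, so $h_2$ receives a grain; carrying $p-1$ grains, $h_2$ becomes unstable and must eventually fire, placing $h_2\in V_2$, whereas $h_2\in V_1$ and $V_1\cap V_2=\emptyset$.

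The step I expect to be delicate is exactly this geometric lemma, for the scaling-toward-the-origin step only lands back inside $\neighborhood^+$ when the origin belongs to $\mathrm{conv}(\neighborhood^+)$. The remaining degenerate case, in which all movement vectors lie in an open half-plane bounded by a line through the origin, must be handled separately: there each arc strictly increases the linear functional given by the separating direction, so firings propagate monotonically along it, and I would argue that such a neighborhood cannot transport a signal to the east border while keeping it off the south border (and symmetrically north-to-south), so that no crossing exists at all and the statement holds vacuously. I would also need to absorb non-transversal incidences — an arc of one graph passing through a vertex of the other — either by selecting an innermost crossing or by a general-position argument on the integer endpoints.
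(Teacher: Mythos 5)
Your overall route is the same as the paper's: apply Proposition \ref{prop:distinct} to obtain vertex-disjoint firing graphs, extract a pair of crossing arcs $(h_1,h_2)$ and $(v_1,v_2)$, use convexity to show that (up to symmetry) $h_2\in\neighborhood^+(v_1)$, and conclude that $h_2$ receives a grain it cannot absorb while staying consistent with the dynamics. Two remarks on what you do differently. First, the contradiction setup forces you into the side argument that the construction of Proposition \ref{prop:distinct} adds no grain under the all-$(p-1)$ hypothesis; this detour is avoidable. By Remark \ref{remark:distinct} the disjoint firing graphs are induced subgraphs of the original ones, so the crossing arcs you find are already arcs of $G_1$ and $G_2$ for the \emph{original} configuration $c$; since $h_2\in V_1\setminus V_2$, during the evolution of $c+N(\vec{n})$ the vertex $h_2$ receives a grain from $v_1$ yet never fires, whence $c(h_2)\le p-2$ directly, which is exactly how the paper concludes. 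Second, your algebra for the geometric step ($h_2-v_1=(1-\mu)\vec{m_1}+\lambda\vec{m_2}$ and $v_2-h_1=\mu\vec{m_1}+(1-\lambda)\vec{m_2}$, with coefficient sums adding to $2$) is correct and is in fact more than the paper provides: the paper simply asserts ``either $h_2$ is a neighbor of $v_1$, or $v_2$ is a neighbor of $h_1$.''

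The genuine gap is the one you flag yourself: when $(0,0)\notin\mathrm{conv}(\neighborhood^+)$, the rescaling of a convex combination toward the origin need not land in the shape, and the dichotomy can actually fail (for instance, for the integer points of a small disk centered far from the origin, one can exhibit two crossing movement vectors with neither $h_2-v_1$ nor $v_2-h_1$ in the neighborhood). Your proposed patch --- that such neighborhoods admit no crossing, so the statement is vacuous there --- is only a sketch, and the monotonicity observation alone does not close it: if the separating direction is close to $\vec{x}$, the west-to-east transport is unobstructed and only the north-to-south transport can be excluded, by a quantitative comparison of the per-firing displacement with the $n\times n$ bound and the isolation constraint; if the separating direction is close to $\vec{x}+\vec{y}$, every firing drifts both eastward and southward and one must weigh the two drifts against \emph{both} isolation constraints simultaneously. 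Until this case is argued (and the non-transversal incidences you mention are absorbed --- note that a general-position perturbation is not available since the endpoints are fixed integer points), the proof is incomplete. It is only fair to add that the paper's own proof asserts the convexity dichotomy without addressing this degenerate case at all, so your analysis localizes a point the paper leaves implicit rather than introducing a new error.
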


\begin{proof}
  Let us consider a crossing configuration $c$ with two fring graphs $G_1=(V_1,A_1)$, $G_2=(V_1,A_1)$. According to Proposition \ref{prop:distinct} and Remark \ref{remark:distinct}, we know that there are two distinct fring graphs $G'_1=(V'_1,A'_1) \subseteq G_1$, $G'_2=(V'_2,A'_2) \subseteq G_2$. Then, any pair of crossing arcs between the two subgraphs is a pair of crossing arcs between $G_1, G_2$. Consider one of such pairs, say $((h_1,h_2),(v_1,v_2))$, where $(h_1,h_2 \in V'_1 \subseteq V_1$ and $v_1,v_2 \in V'_2 \subseteq V_2)$.  

  Since the neighborhood is convex, either $h_2$ is a neighbor of $v_1$, or $v_2$ is a neighbor of $h_1$. Assume that $h_2$ is a neighbor of $v_1$, as $h_2 \in V'_1 \subseteq V_1$ then $h_2 \not \in (V_1 \cap V_2)$, so $h_2 \not \in V_2$.  It means that, in configuration $c$, firing $v_1$ does not fire $h_2$, hence the number of grains at position $h_2$ is at most $p-2$.
\end{proof}

When one thinks about a shape for which crossing may be difficult to perform, a natural example would be a circular shape. Figure \ref{fig:crossing-circle} shows that given the convex shape $u$ defined as the unit disk, the neighborhood $\neighborhood^+_{u,7.25}$ can perform crossing.

\begin{figure}[!h]
  \centering \includegraphics{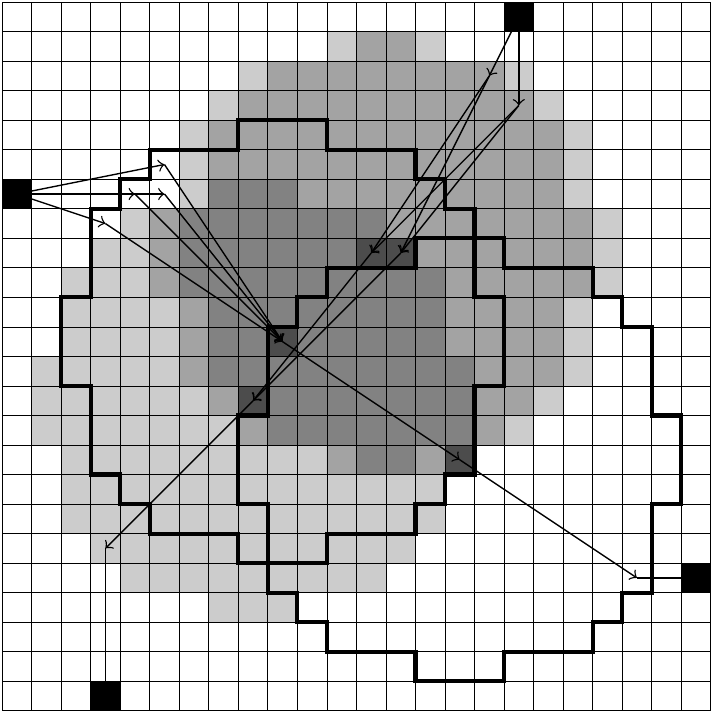}
  \caption{Crossing configuration $c$ for $\neighborhood^+_{u^+,7.25}$ with $u^+$ the unit disk shape. Five important cells of the two firing graphs are pointed with a dark grey color, and arcs of both firing graphs are drawn. Neighborhoods of the cells in the north to south firing graph are darken, and the contour of neighborhoods of the cells in the west to east firing graph are drawn. One can see that there are enough vertices inside the inverse neighborhoods (note that $\neighborhood^{-}_{u^+,7.25}=\neighborhood^+_{u^+,7.25}$) of the important cells (and outside the inverse neighborhoods of the cells that belong to the other firing graph), so that they fire only in their respective firing graph ({\em i.e.} $c(v)=\degree^-(v)$ for $v$ an important cell). Firing cells of the border are colored in black ($c(v)=p-1$ for these cells). ($c(v)=0$ elsewhere.)}
  \label{fig:crossing-circle}
\end{figure}

%An example of crossing configuration with triangular shape (see Figure\ref{fig:convex-cross}).

%\begin{figure}[!h]
%  \centering \includegraphics{convex-cross-neighborhood.pdf}\\[.5em]
%  \centering \includegraphics{convex-cross-configuration.pdf}
%  \caption{An example of convex neighborhood (top) that admits a crossing configuration (bottom). On top the black cell is fired, and the thick line delimits the set of cells that receive one grain ($p=29$ cells). The bottom picture is a crossing configuration with \textcolor{red}{unitary vectors (notion to define)}, where empty cells initially contain no grain.}
%  \label{fig:convex-cross}
%\end{figure}

%\begin{definition}[Symmetric neighborhood]
 % A neighborhood $\neighborhood^+$ is symmetric if and only if $\forall \vec{m} \in \neighborhood^+: -\vec{m} \in \neighborhood^+$.
%\end{definition}

%
\subsection{Crossing and shapes}

In this section we prove our main result: any shape can ultimately perform crossing. We first analyse how regions inside a shape scale with $r$. The following lemma is straightforward from the definition of the neighborhood of a shape (Definition \ref{def:shape}), it expresses the fact that neighboring relations are somehow preserved when we convert shapes to neighborhoods.

\begin{lemma}
  \label{lemma:partition}
  Let $s^{+1},\dots,s^{+k} \subset \R^2$ be a partition of the shape $s^+$, then $\neighborhood^+_{s^{+1},r},\dots,\neighborhood^+_{s^{+k},r}$ is a partition of the neighborhood $\neighborhood^+_{s^+,r}$.
\end{lemma}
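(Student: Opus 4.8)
The plan is to prove that the partition of the shape transfers to a partition of the corresponding neighborhood, by unwinding Definition~\ref{def:shape} and using the set-theoretic properties that make $S^1,\dots,S^k$ a partition. Concretely, for a fixed ratio $r>0$, the map sending a subshape $s^{+i}$ to its neighborhood $\neighborhood^+_{s^{+i},r}$ is obtained by collecting exactly those integer points $(x,y)\in\Z^2$ whose rescaled version $(x/r,y/r)$ lands in $s^{+i}$, and then deleting $(0,0)$. So the whole argument reduces to showing that this assignment (i) covers all of $\neighborhood^+_{s^+,r}$ and (ii) produces pairwise disjoint pieces.

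First I would verify the covering property, i.e. $\bigcup_{i=1}^k \neighborhood^+_{s^{+i},r} = \neighborhood^+_{s^+,r}$. Take any $(x,y)\in\neighborhood^+_{s^+,r}$; by definition this means $(x/r,y/r)\in s^+$ and $(x,y)\neq(0,0)$. Since $s^{+1},\dots,s^{+k}$ is a partition of $s^+$, the point $(x/r,y/r)$ lies in exactly one block $s^{+i}$, so $(x,y)$ lies in $\neighborhood^+_{s^{+i},r}$; this gives the inclusion $\subseteq$. Conversely each $\neighborhood^+_{s^{+i},r}\subseteq\neighborhood^+_{s^+,r}$ because $s^{+i}\subseteq s^+$, so the reverse inclusion is immediate. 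Both sides delete the same point $(0,0)$, so removing it does not disturb the equality.

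Next I would check disjointness: for $i\neq j$ I must show $\neighborhood^+_{s^{+i},r}\cap\neighborhood^+_{s^{+j},r}=\emptyset$. If some $(x,y)$ belonged to both, then $(x/r,y/r)$ would lie in both $s^{+i}$ and $s^{+j}$, contradicting $s^{+i}\cap s^{+j}=\emptyset$ from the definition of a partition. Combining the covering and disjointness properties establishes that $\neighborhood^+_{s^{+1},r},\dots,\neighborhood^+_{s^{+k},r}$ is a partition of $\neighborhood^+_{s^+,r}$, as claimed.

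I do not expect any real obstacle here: the statement is, as the authors note, straightforward from the definition, and the only subtlety is the uniform removal of $(0,0)$ across all blocks, which I would handle by noting that the scaling map $(x,y)\mapsto(x/r,y/r)$ is a bijection of $\R^2$ (since $r>0$) and hence commutes with unions, intersections, and complements. The main thing to be careful about is purely bookkeeping: making sure that deleting $(0,0)$ from the global neighborhood is consistent with deleting it from whichever single block happens to contain the origin's preimage, but since $(0,0)$ is removed in every neighborhood-of-shape construction by definition, this causes no loss of covering or of disjointness.
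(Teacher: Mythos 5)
Your proposal is correct and follows exactly the route the paper intends: the paper gives no explicit proof, declaring the lemma straightforward from Definition~\ref{def:shape}, and your unwinding of covering plus disjointness (with the observation that removing $(0,0)$ commutes with taking unions over the blocks) is precisely that straightforward argument made explicit. No gaps.
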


The next lemma states that any non-flat region inside a shape can be converted (with some appropriate ratio) to an arbitrary number of discrete cells in the corresponding neighborhood.

\begin{lemma}
  \label{lemma:k}
  Let $s^+$ be a shape, and $s' \subseteq s^+$ be non-empty and non-flat. Then for any $k \in \N$, there exists a ratio $r_0 > 0$ such that for any $r \geq r_0$, $|\neighborhood^+_{s',r}| \geq k$.
\end{lemma}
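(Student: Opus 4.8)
The plan is to exploit the non-flatness of $s'$ to locate a fixed genuinely two-dimensional region inside it, and then observe that scaling this region by $r$ captures an unbounded number of lattice points. First I would invoke the hypotheses: since $s'$ is non-empty, pick any point $p \in s'$; by the definition of non-flat shape there exist two further points so that the triangle $T$ they span has strictly positive area and satisfies $T \subseteq s'$. The interior of $T$ is then a non-empty open subset of $\R^2$, and consequently contains some open axis-aligned square $Q = (a,a+\delta) \times (b,b+\delta)$ with $\delta > 0$. Because $Q \subseteq T \subseteq s'$, the definition of the neighborhood of a shape (Definition \ref{def:shape}) yields, for every ratio $r>0$, the inclusion
$$\{(x,y) \in \Z^2 \mid (x/r,y/r) \in Q\} \setminus \{(0,0)\} \subseteq \neighborhood^+_{s',r},$$
so it suffices to count integer points $(x,y)$ with $(x/r,y/r) \in Q$, equivalently lattice points lying in the scaled square $(ra,\, ra+r\delta) \times (rb,\, rb+r\delta)$.

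The second step is an elementary lattice-counting estimate. The number of integers strictly contained in an open real interval of length $L$ is at least $L-1$, so the scaled square contains at least $(r\delta - 1)^2$ lattice points whenever $r\delta \geq 1$. Since $\delta$ is a fixed positive constant depending only on $s'$, the quantity $(r\delta - 1)^2$ grows without bound as $r \to \infty$. Concretely, taking $r_0 = (\sqrt{k+1}+1)/\delta$ guarantees $(r\delta - 1)^2 \geq k+1$ for every $r \geq r_0$. Removing the single excluded cell $(0,0)$ still leaves at least $k$ points, all of which belong to $\neighborhood^+_{s',r}$, giving the desired bound $|\neighborhood^+_{s',r}| \geq k$.

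The conceptual heart of the argument, and the only place where the hypotheses are truly needed, is the extraction of a genuinely two-dimensional box $Q$ from $s'$; this is exactly what non-flatness (a positive-area triangle) together with non-emptiness provide. The main obstacle to keep in mind is that flatness would break the argument entirely: a region contained in a line may contain only a bounded number of lattice points no matter how large $r$ is (for instance, a segment of irrational slope through the origin acquires no new integer point under any scaling), so the area coming from the triangle is essential and the estimate cannot be recovered from a one-dimensional region. The remaining work is only to make the interval-counting inequality precise, which is routine.
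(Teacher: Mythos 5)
Your proof is correct and follows essentially the same route as the paper's: non-flatness yields a positive-area triangle, inside which one finds a square whose $r$-scaling captures arbitrarily many lattice points, with monotonicity under inclusion reducing the count for $s'$ to the count for that square. If anything, your explicit $(r\delta-1)^2$ estimate handles the quantifier ``for all $r \geq r_0$'' more directly than the paper's doubling sequence $r = 2^{k'}/r^{**}$, which as written only bounds the count at those particular ratios.
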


\begin{proof}
  Since $s'$ is non-flat, there exists a triangle $T$ of strictly positive area inside $s'$. It follows from Definition \ref{def:shape} that the number of discrete points $\Z^2 \cap \neighborhood^+_{T,r}$ can be made arbitrarily large as $r$ increases: let $r^*>0$ be such that $T$ contains a regular triangle of size $r^*$, then $T$ contains a disk of radius $\sqrt{\frac{1}{12}}r^*$, this implies that $T$ contains a square of size $r^{**}=\sqrt{\frac{1}{6}}r^*$ (in any orientation of the square), hence $|\neighborhood^+_{T,\frac{1}{r^{**}}}| \geq 1$ and $|\neighborhood^+_{T,2^{k'}\frac{1}{r^{**}}}| \geq 4^{k'}$. The result on $\neighborhood^+_{s',r}$ follows from Lemma \ref{lemma:partition}: for any ratio $r$ we have $\neighborhood^+_{T,r} \subseteq \neighborhood^+_{s',r}$.
\end{proof}

\begin{remark}
  \label{remark:inverse}
 Lemmas \ref{lemma:partition} and \ref{lemma:k} also apply to the inverse shape $s^{-}$ and the inverse neighborhood $\neighborhood^{-}_{s^+,r}$, because the inverse neighborhood is also a neighborhood and the inverse shape is also a shape.
\end{remark}

We now prove our main result.

\begin{theorem}
  \label{theorem:shape}
  Any non-flat shape can ultimately perform crossing.
\end{theorem}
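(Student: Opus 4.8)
The plan is to exhibit, for every sufficiently large $r$, an explicit crossing configuration on $G^{\neighborhood^+_{s^+,r}}$. The key is to specify the construction once and for all in \emph{continuous} terms — as a fixed collection of subshapes of $s^+$ together with a combinatorial wiring diagram — so that Lemma \ref{lemma:k} (and its inverse version, Remark \ref{remark:inverse}) guarantees that for all large $r$ the discretizations of these subshapes contain as many cells as the diagram demands. The configuration will consist of two \emph{wires}, one carrying the west-to-east signal and one the north-to-south signal, meeting in a single \emph{crossing gadget}. Away from the gadget the wires use the naive loading $c(v)=p-1$ (so the firing of one cell ignites the next, with Lemma \ref{lemma:one} ensuring each cell fires at most once), while inside the gadget they use a threshold loading chosen so that each firing cell ignites exactly when the cells of \emph{its own} firing graph that precede it have fired. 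By Proposition \ref{prop:distinct} and Remark \ref{remark:distinct} I may moreover assume from the outset that the two firing graphs are vertex-disjoint, which simplifies the bookkeeping.

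First I would fix the two propagation directions. Since $s^+$ is non-flat it contains a triangle $T$ of positive area, and by Lemma \ref{lemma:k} the cluster $\neighborhood^+_{T,r}$ contains, for large $r$, arbitrarily many cells; as these fill a two-dimensional region they supply two non-collinear movement vectors $\vec a,\vec b$, which I take to be among the longer vectors of the cluster so that the resulting wires are sparse. A wire in direction $\vec a$ is the sequence $u,u+\vec a,u+2\vec a,\dots$ of cells, each loaded with $p-1$ grains; I route one wire along $\vec a$ and the other along $\vec b$ so that their supporting lines meet exactly once, and I choose the bounding rectangle and the pair of borders according to the orientation of $\vec a,\vec b$. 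This is legitimate because a crossing may be set up between any two adjacent borders and their mirrors, and inside a rectangle, as recorded in the remark following the definition of a shape ultimately performing crossing; thus each wire joins one border to its mirror border while the two entry borders remain adjacent.

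The heart of the argument is the crossing gadget. The obstruction, already visible in the corollary for convex neighborhoods, is that near the intersection the two wires are geometrically interleaved, so the naive loading fails: a firing cell of one wire may drop a grain on a cell of the other. I would resolve this exactly as in the unit-disk example of Figure \ref{fig:crossing-circle}: replace the few cells near the intersection by \emph{threshold} cells. For a gadget cell $v$ of the west-to-east graph $G_1$, I pre-load $c(v)$ so that $v$ fires precisely when it has received grains from all of its intended $G_1$-predecessors, and I place the feeder subshape supplying those predecessors so that $\neighborhood^{-}(v)$ contains enough cells of $V_1$ to reach the threshold while containing few enough cells of $V_2$ that the stray grains arriving from $V_2$ during the north-to-south run can never push $v$ to fire; symmetrically for the cells of $G_2$. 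Because the number of cells in any non-flat feeder region grows with $r$ (Lemma \ref{lemma:k}), these ``enough / few enough'' counts can be met simultaneously for all large $r$. Once the gadget guarantees that $G_1$ and $G_2$ never trigger one another, the five defining conditions of a crossing configuration follow: stability holds by construction; the two transport conditions hold because each wire propagates undisturbed to its exit border; and the two isolation conditions hold because, the graphs being non-interfering, the west-to-east firing never reaches the south border nor the north-to-south firing the east border.

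I expect the gadget design, together with the \emph{uniform} control of the feeder counts, to be the main obstacle. Concretely, for each threshold cell $v$ I must bound $|\neighborhood^{-}(v)\cap V_j|$ from above for the foreign graph $j$ and $|\neighborhood^{-}(v)\cap V_i|$ from below for its own graph $i$, and these constraints compete, since $\neighborhood^{-}(v)=v+\neighborhood^{-}_{s^+,r}$ is a single scaled copy of the (possibly large) shape through which both wires may pass. For shapes whose scaled neighborhood has long reach in every direction — the unit disk being the representative case, where the inverse neighborhoods of neighbouring gadget cells overlap heavily — separating the two graphs will require placing their feeder subshapes in disjoint angular sectors of $s^+$ and exploiting the non-flatness of each sector so that Lemma \ref{lemma:k} applies to it. Making this placement explicit, and checking that the discrete counts depend on $r$ monotonically enough to yield a single threshold $r_0$ beyond which every $r$ works, is where the real work lies; the remaining verifications of stability, transport, and isolation are then routine consequences of the non-interference of $G_1$ and $G_2$.
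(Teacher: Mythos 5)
Your overall architecture --- two wires meeting in a threshold gadget, with Lemma \ref{lemma:k} and Lemma \ref{lemma:partition} guaranteeing that a construction specified by continuous subshapes discretizes correctly for all sufficiently large $r$ --- is exactly the strategy of the paper. But the step you explicitly defer (``making this placement explicit \dots{} is where the real work lies'') is not a routine verification: it is the entire content of the theorem, and your sketch does not contain the idea that makes it work for an \emph{arbitrary} non-flat shape. The competition you correctly identify --- $\neighborhood^{-}(v)$ is a single scaled copy of $s^{-}$ through which both wires must pass near the intersection, so ``enough own predecessors'' fights ``few foreign in-neighbors'' --- cannot in general be resolved by placing feeders in ``disjoint angular sectors of $s^+$'': for the unit disk, every cell near the crossing point has essentially the whole gadget inside its inverse neighborhood, and nothing in your proposal bounds the number of foreign cells landing there. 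The paper resolves the competition asymmetrically. It takes $\vec{h}$ to be a \emph{longest} movement vector, so that $s^{-}(h_2)$ is contained in the disk of radius $|\vec{h}|$ centered at $h_2$; a short case analysis on the quadrants of the frame $O\vec{h}\vec{s^2_y}$ then produces a vector $\vec{v}$ and a placement of $v_1$ with $]v_1,v_2[$ crossing $]h_1,h_2[$ but $v_1 \notin s^{-}(h_2)$. This one geometric fact kills the dangerous direction of interference exactly ($\neighborhood^{-}(h_2) \cap V_1 = \emptyset$, and $h_2$ has threshold $2 > |\{v_2\} \cap \neighborhood^{-}(h_2)|$), while all remaining interference is absorbed arithmetically by fixing small cardinalities ($|H_1|=2$, $|V_1|=4$) and thresholds ($c(h_1)=p-6$, $c(v_1)=p-4$) with enough slack to survive \emph{every} one of the at most five foreign cells firing --- no upper bound on foreign in-neighbors beyond the trivial one is ever needed. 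The maximality of $\vec{h}$, and of the projection of $\vec{v_e}$ orthogonal to $\vec{h}$, is then reused to route the access and exit wires without re-entering the gadget. Without an argument of this kind, your gadget is a specification of what must be achieved, not a proof that it can be.

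A minor point: invoking Proposition \ref{prop:distinct} and Remark \ref{remark:distinct} to ``assume from the outset that the two firing graphs are vertex-disjoint'' is backwards. That proposition converts an already existing crossing into one with disjoint firing graphs, so it gives you nothing while you are still trying to build the first crossing; disjointness has to come out of the construction itself, as it does in the paper.
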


In the following construction, we choose some longest movement vectors for convenience with the arguments, but many other choices of movement vectors may allow to create crossing configurations.

\begin{proof}
  Let $s^+$ be a non-flat shape, we will show that there exists some $r_0$ such
  that for all $r \geq r_0$, $\neighborhood^+_{s^+,r}$ can perform crossing. After
  defining the setting, we will first construct the part of the finite crossing
  configuration where movement vectors (corresponding to arcs of the two firing
  graphs) do cross each other. Then we will explain how to construct the rest
  of the configuration in order to connect this crossing part to firing graphs coming from
  two adjacent borders, and to escape from the crossing part toward the two mirror
  borders.

  \paragraph{Setting.}

  This paragraph is illustrated on Figure
  \ref{fig:cross-longestvector}. Let $\vec{h}$ be a longest movement vector of
  $s^+$, $h_1=(0,0)$, and $h_2=(0,0)+\vec{h}$. The line $(h_1,h_2)$ cuts the
  shape $s^+$ into two parts, $s^1$ and $s^2$. We will choose one these two
  parts, by considering projections onto the direction orthogonal to $\vec{h}$.
  Let $\vec{v_e}$ be a vector of $s^+$ whose projection onto the direction
  orthogonal to $\vec{h}$ is the longest. Without loss of generality, let $s^2$
  be the part of $s^+$ that contains the movement vector $\vec{v_e}$. We denote
  $\vec{s^2_y}$ the projection of $\vec{v_e}$ onto the direction orthogonal to
  $\vec{h}$. 
  %We will work in the orthonormal coordinate system defined by the base
  %vectors $\vec{h}$ and $\vec{s^2_y}$, that we denote $O\vec{h}\vec{s^2_y}$.
  The fact that $\vec{h}$ and $\vec{v_e}$ have some maximality property will be
  useful in order to escape from the crossing part towards the east
  and south borders.

  \begin{figure}[!h]
    \centering \includegraphics{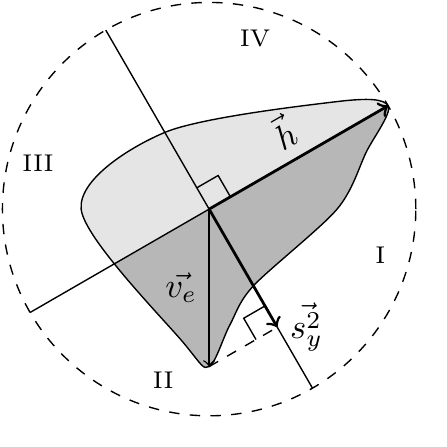}
    \caption{$\vec{h}$ is the longest movement vector of $s^+$, that cuts $s^+$
      into $s^1$ (light color, top) and $s^2$ (darker color, bottom); $\vec{v_e}$
      is a vector of $s^+$ that has the longest projection onto the direction
      orthogonal to $\vec{h}$, which we denote $\vec{s^2_y}$. Let us consider the
      orthonormal coordinate system $O\vec{h}\vec{s^2_y}$, which defines four
      quadrants pictured with roman numbers.}
    \label{fig:cross-longestvector}
  \end{figure}

  \paragraph{Crossing movement vectors in $\R^2$.}

  We now prove that there
  always exists a non-null movement vector $\vec{v} \in s^2$, not collinear with
  $\vec{h}$, that can be placed from $v_1$ to $v_2=v_1+\vec{v}$ in $\R^2$, such
  that the intersection of line segments $]v_1,v_2[$ and $]h_1,h_2[$ is not empty
  (loosely speaking, $\vec{h}$ and $\vec{v}$ do cross each other), and most
  importantly $v_1 \notin s^{-}(h_2)$, as depicted on Figure
  \ref{fig:cross-longestvector-central}). We consider two cases in order to
  find $\vec{v}$ and $v_1$ (we recall that quadrants are pictured on Figure
  \ref{fig:cross-longestvector}).

  \begin{figure}[!h] \centering
    \includegraphics{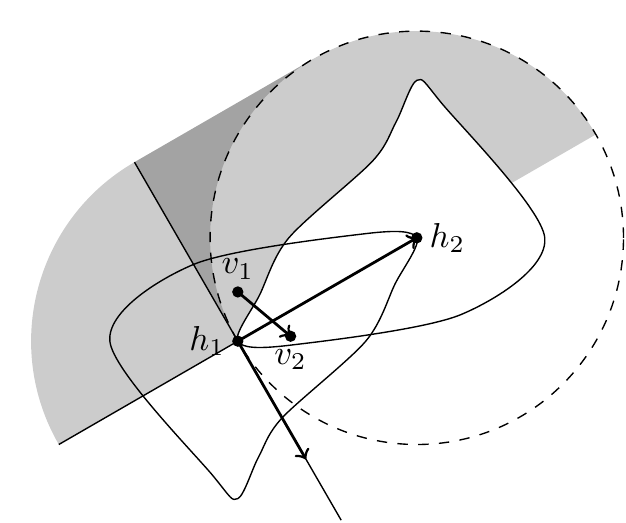}
    \caption{$\vec{h_1h_2}=\vec{h}$, the contour of $s^+(h_1)$ and
      $s^{-}(h_2)$ are drawn, and the circle of radius $|\vec{h}|$ centered at
      $h_2$ is dashed. If $s^+$ in non-flat then we can always find
      $\vec{v_1v_2}=\vec{v} \in s^2$ not collinear with $\vec{h}$ so that the
      segment $]v_1,v_2[$ crosses the segment $]h_1,h_2[$, with $v_1 \notin
      s^{-}(h_2)$. All the darken area corresponds to potential positions for $v_1$
      (outside $s^{-}(h_2)$, and so that $]v_1,v_2[$ may cross $]h_1,h_2[$ regarding
      the fact that $\vec{h}$ is a longest vector of $s^+$).}
    \label{fig:cross-longestvector-central}
  \end{figure}

  \begin{itemize}
    \item If $s^+$ has a non-flat subshape $s'$ inside the first
      quadrant, then we take $\vec{v} \in s'$ with strictly positive
      projections $\vec{v_h}$ and $\vec{v_y}$ onto the direction of $\vec{h}$
      and the direction of $\vec{s^2_y}$ (in particular $\vec{v}$ is non-null
      and not collinear with $\vec{h}$). We know that it is always possible
      to fulfill the requirements by placing $v_1$ in $\R^2$ as close as
      necessary to $h_1$, in the region of the fourth quadrant where we
      exclude  the disk of radius $|\vec{h}|$ centered at $h_2$ (see
       Figure \ref{fig:cross-longestvector-central-v}).
      We can for example place $v_1$ at position
      $(0,0)-\frac{\vec{v_y}}{2}+\epsilon\vec{h}$ for a small enough
      $\epsilon \in \R$, $\epsilon>0$.
    \item Otherwise $s^+$ is empty or flat
      inside the first quadrant, thus $\vec{v_e}$ belongs to the second
      quadrant, and $s^{-}(h_2)$ is empty inside the third quadrant (by
      symmetry of $s^{-}$ relative to $s^+$). As a consequence we can for
      example place $v_1$ at position
      $(0,0)+\frac{\vec{h}}{2}-\frac{\vec{v_e}}{2}$, so that
      $\vec{v}=\vec{v_e}$ and $v_1$ verify the requirements ($s^+$ is
      non-flat therefore $\vec{v_e}$ is non-null and not collinear with
      $\vec{h}$).
  \end{itemize}

  \begin{figure}[!h]
    \centering \includegraphics{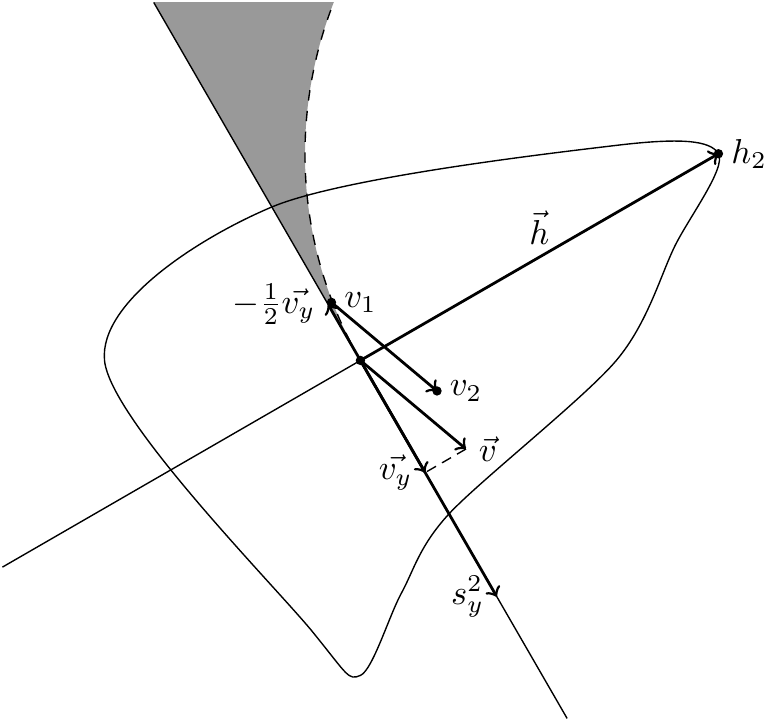}
    \caption{Choosing a vector $\vec{v}$ and a point $v_1$ at
      $(0,0)-\frac{\vec{v_y}}{2}+\epsilon\vec{h}$ such that
      $v_1 \notin s^{-}(h_2)$ (if $\epsilon>0$ is small enough then $v_1$
      belongs to the darken region), and such that segments $]h_1,h_2[$ and
      $]v_1,v_2[$ intersect. The projection of $\vec{v}$ along the axis
      $Os^2_y$ si denoted $\vec{v_y}$.}
    \label{fig:cross-longestvector-central-v}
  \end{figure}

  \paragraph{Crossing movement vectors in $\Z^2$.}

  We claim that the conditions on $\vec{v}$ allow to construct the crossing part
  of the crossing configuration as described on Figure
  \ref{fig:crossing-type3}, for $\neighborhood^+_{s^+,r}$ when $r$ is big enough.
  Indeed, as the shape is non-flat, points $h_1$ and $v_1$ can be converted to
  non-empty and non-flat subshapes $s^+_{h_1}$ and $s^+_{v_1}$ (for example by
  taking a disk of radius $\frac{\epsilon}{2}$ around each point), and we can
  apply Lemma \ref{lemma:k} to find $|H_1|=2$ and $|V_1|=4$ vertices in the
  neighborhoods corresponding to their respective subshapes,
  $\neighborhood^+_{s^+_{h_1},r}$ and $\neighborhood^+_{s^+_{v_1},r}$, when the ratio
  $r$ is bigger than some $r_1 \in \R$. Furthermore, Lemma
  \ref{lemma:partition} ensures that all the vertices in
  $\neighborhood^+_{s^+_{h_1},r}$ and $\neighborhood^+_{s^+_{v_1},r}$ preserve the neighboring
  relations of $h_1$ and $v_1$.

  Therefore we now have, for any $r \geq r_1$, a crossing part for the crossing
  configuration, as described on Figure \ref{fig:crossing-type3}. We will now
  explain how to plug to the four borders and define a proper crossing
  configuration with some vectors $\vec{n},\vec{e},\vec{s},\vec{w} \in E_n$.

  \begin{figure}[!h] \centering \includegraphics{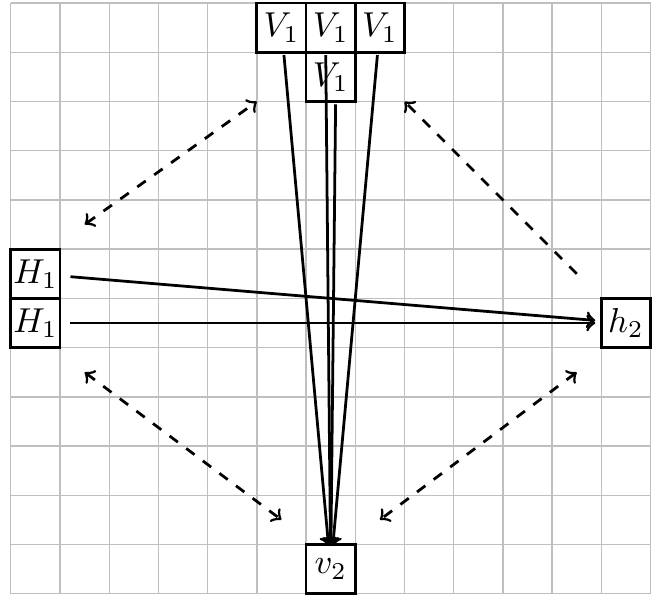}
    \caption{General form of the crossing part of our crossing configuration $c$.
      There are four regions: sets $H_1$, $V_1$,
      and individual vertices $h_2$, $v_2$. Plain arcs represent required
      neighboring relations, and dashed arcs represent other possible neighboring
      relations: we only require that none of the vertices in $V_1$ has $h_2$ in
      their neighborhood ($\neighborhood^{-}(h_2) \cap V_1 = \emptyset$). In this
      construction, we have to ensure that firing all vertices of $V_1$ and $v_2$ does not trigger the firing of $h_2$ (by acting on $H_1$), {\em i.e.}
      $|H_1| > |\{v_2\} \cap \neighborhood^{-}(h_2)|$ and for all $h_1 \in H_1$, $c(h_1) < p - | \neighborhood^{-}(h_1) \cap (V_1 \cup \{v_2\}) |$ (similarly for vertices of $V_1$).\\
      Note that all these conditions are verified if: $|H_1|=2$, $|V_1|=4$, $c(h_1)=p-6$ for all $h_1 \in H_1$, $c(h_2)=p-2$,
      $c(v_1)=p-4$ for all $v_1 \in V_1$, $c(v_2)=p-4$.}
    \label{fig:crossing-type3}
  \end{figure}

  \paragraph{Coming from two adjacent borders.}

  Let us now construct the part of the crossing configuration that connects (in
  their respective firing graphs) two adjacent borders to vertices of the
  sets $H_1$ and $V_1$. This can simply be achieved by using the movement
  vectors $\vec{h}$ and $\vec{v_e}$, respectively (see Figure \ref{fig:cross-longestvector-rest}).

  \begin{figure}[!h]
    \centering \includegraphics{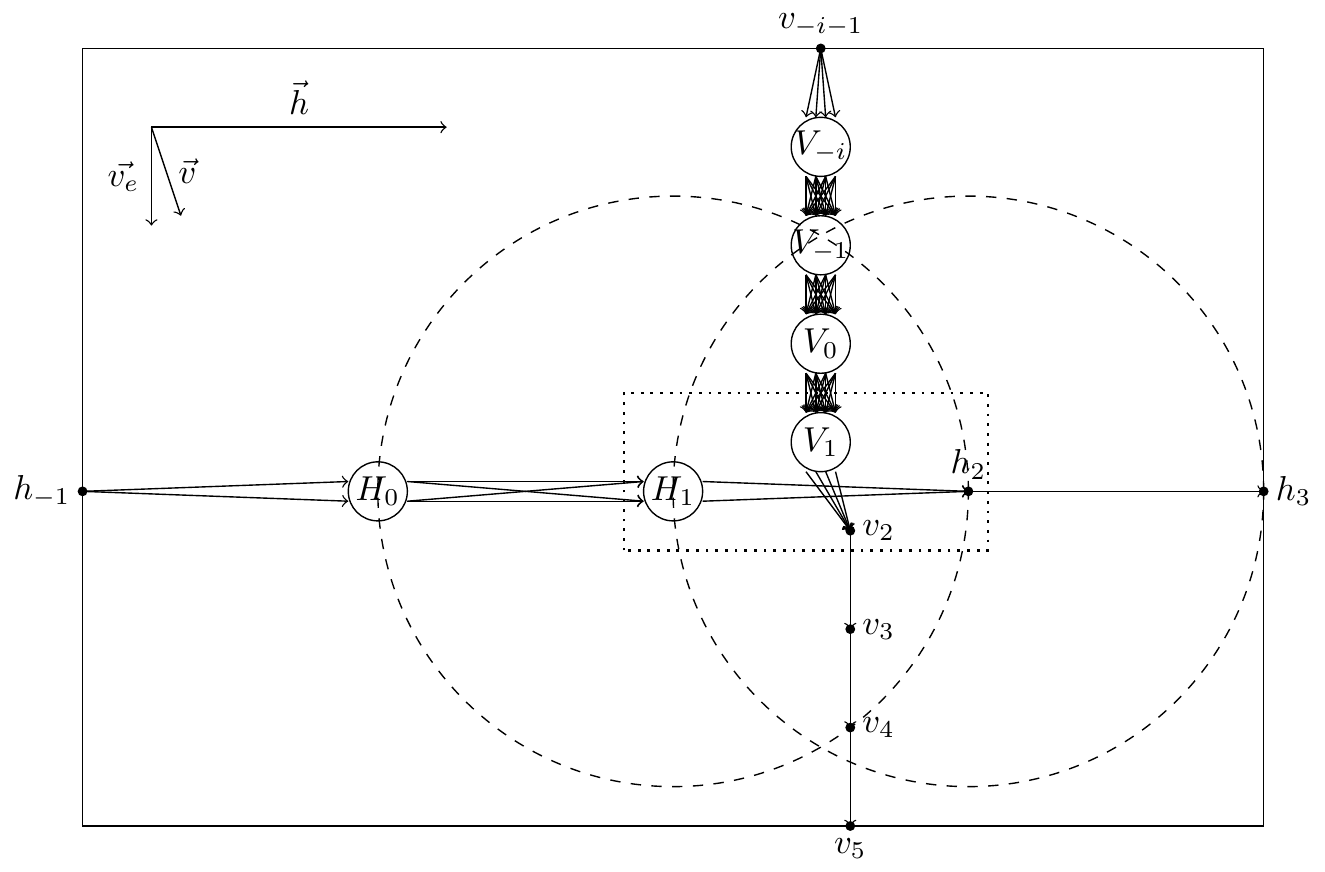}
    \caption{Global illustration of the crossing configuration. The crossing part, which uses the movement vector $\vec{v}$, is dotted. To come from two adjacent borders and escape toward the two mirror borders, the horizontal and vertical firing graphs respectively use the movement vectors $\vec{h}$ and $\vec{v_e}$.}
    \label{fig:cross-longestvector-rest}
  \end{figure}

  We construct the configuration in the reverse direction: starting from $H_1$ backward to a border, in three steps. 
  {\bf Step 1:} in $\R^2$, we consider the point $h_0$ at coordinate
  $h_1-\vec{h}$ and a non-flat subshape $s^+_0$ containing $h_0$. By Lemma \ref{lemma:k}, there exists $r_2 \in \R$ such that for any ratio $r \geq r_2$, $|\neighborhood^+_{s^+_0,r}| \geq 6$ and $\forall h_i \in
  \neighborhood^+_{s^+_0,r}$ we have $H_1 \subseteq \neighborhood^+_{s^+,r}(h_i)$ (recall that vertices of $H_1$ have $p-6$ grains). Let $H_0=\neighborhood^+_{s^+_0,r}$.
  Since $\vec{h}$ is a longest vector, $H_0$ will not
  interfere with the rest of the crossing, {\em i.e.}
  $$
    \left(\bigcup_{h_0 \in
    H_0} \neighborhood^+_{s^+,r}(h_0) \right) \cap \left( V_1 \cup \{v_2\} \cup
    \{h_2\} \right) = \emptyset.
  $$
  We place $p-1$ grains in the vertices of the
  set $H_0$. 
  {\bf Step 2:}
  we can now choose one vertex $h_{-1}$ in the direction of $-\vec{h}$ such that $H_0 \subseteq \neighborhood^+_{s^+,r}(h_{-1})$. 
  The third
  step will be explained thereafter.
  
  A similar construction can be achieved for $V_1$ using the direction given by
  $-\vec{v_e}$, using the maximality of $\vec{v_e}$ in the direction orthogonal to
  $\vec{h}$.
  % the new vertices won't interfere with the rest of the crossing, as
  %it was the case for $H_0$.
  The difference with the previous case is that we may need to apply few times the first step, giving a sequence of points
  $v_0,v_{-1},v_{-2},\dots$ corresponding to sets $V_0,V_{-1},V_{-2},\dots$ of
  vertices on which we put $p-4$ grains, until we have some point $v_{-i}$
  outside the union of the two disks of radius $|\vec{h}|$ centered at $h_1$
  and $h_2$. The next point, $v_{-i-1} \in \R^2$ can safely constitute the second
  step, {\em i.e.} we can take only one vertex $v_{-i-1} \in \Z^2$ such that
  $V_{-i} \subseteq \neighborhood^+_{s^+,r}(v_{-i-1})$. Let $r_3$ be the maximum of ratios given by
  applications of Lemma \ref{lemma:k} in this case.

  {\bf Step 3:} we now have two vertices $h_{-1}$ and $v_{-i}$, that we can
  consider as part of two adjacent borders given by the directions of
  $-\vec{h}$ and $-\vec{v_2}$, respectively (we may again use the fact that the
  shape is non-flat in order to avoid any problem, for example if $-\vec{h}$
  points in a direction collinear with $\vec{x}+\vec{y}$, {\em i.e.} towards an
  angle between two borders rather than one border). This defines two vectors
  of $E_n$ corresponding to two adjacent borders.

  \paragraph{Escaping toward the two mirror borders.}

  Escaping from the crossing part towards the two mirror borders is very similar
  to coming to from the previous two adjacent borders: we use the movement
  vectors $\vec{h}$ and $\vec{v_e}$ that still do not interfere with the rest
  of the crossing configuration thanks to their maximality property, and define
  as many vertices as necessary on which we place $p-1$, until we reach the two
  mirror borders given by the directions of $\vec{h}$ and $\vec{v_2}$, thus
  defining two vectors of $E_n$ corresponding to the two mirror borders (see again Figure \ref{fig:cross-longestvector-rest}). Let
  $r_4 \in \R$ be the maximum of ratios given by applications of Lemma
  \ref{lemma:k} in this case.

  \paragraph{Conclusion.}

  We have first constructed a crossing part where arcs of the respective firing
  graphs do cross, and in a second part we constructed the rest of the
  configuration in order to connect the firing graphs from two adjacent borders
  to the two incoming endpoints of the crossing part, and finally we constructed
  the rest of the configuration in order to connect the firing graphs from the
  two outgoing endpoints of the crossing part to the two mirror borders of the
  crossing configurations. This configuration is finite, stable, and transports
  from two adjacent borders to the two mirror borders, with isolation, {\em i.e.} it
  is a crossing configuration.
  
  Let $r_0=\max\{r_1,r_2,r_3,r_4\}$, we have therefore achieved to prove
  that for any ratio $r \geq r_0$, the neighborhood $\neighborhood^+_{s^+,r}$ can
  perform crossing.
\end{proof}

 %\textcolor{red}{Question: we found that shape $s$ is circle, then $s$ can make crossing when a neighborhood with radius $\geq 8$; my prediction is that any neighborhood whose longest movement is greater than 8, can make crossing??}

%%%%%%%%%%%%%%%%%%%%%%%%%%
\section{Conclusion and perspective}

After giving a precise definition of crossing configurations in the abelian sandpile model on $\Z^2$ with uniform neighborhood, we have proven that the corresponding firing graphs can always be chosen to be distinct. We have seen that this fact has consequences on the impossibility to perform crossing for some neighborhoods with short movement vectors, and that crossing configurations with convex neighborhoods require some involved constructions with firing cells having at least two in-neighbors in the firing graphs. We have presented an example of crossing configuration with a circular shape, and finally proved the main result that any shape can ultimately perform crossing (Theorem \ref{theorem:shape}).

As a consequence of Theorem \ref{theorem:shape}, the conditions on a neighborhood such that it cannot perform crossing cannot be expressed in continuous terms, but are intrinsically linked to the discreteness of neighborhoods. It remains to find such conditions, {\em i.e.} to characterize the class of neighborhoods that cannot perform crossing. More generally, what can be said on the set of neighborhoods that cannot perform crossing? It would also be interesting to have an algorithm to decide whether a given neighborhood can perform crossing or not, since the decidability of this question has not yet been established.

%%%%%%%%%%%%%%%%%%%%%%%%%%
\section{Acknowledgment}

This work received support from FRIIAM research federation (CNRS FR 3513), and JCJC INS2I 2017 project CGETA.

\bibliographystyle{plain}
\bibliography{biblio}

\end{document}